\documentclass[version=preprint]{iacrcc} 
\license{CC-by}

\newif\iffullversion
\fullversiontrue
\allowdisplaybreaks
\DeclareMathOperator{\ord}{ord}
\newcommand{\G}{\mathbb{G}}
\newcommand{\Z}{\mathbb{Z}}

\newcommand{\F}{\mathbb{F}}
\newcommand{\EC}{\mathcal{E}}
\newcommand{\D}{\mathcal{D}}

\usepackage[caption=false]{subfig} 
\usepackage{multirow}
\usepackage{booktabs}
\usepackage{xy}

\usepackage{enumitem}
\begin{document}

\title[running = {Cycles of supersingular elliptic curves for pairing-based proof systems},
]{Cycles of supersingular elliptic curves for pairing-based proof systems}

\addauthor[inst=1,
           orcid={0000-0001-5423-7714},
           email={craig.costello@qut.edu.au},
           ]{Craig Costello}
\addauthor[inst=2,
           orcid={0000-0003-2452-6165},
           email={gkorpal@arizona.edu}
           ]{Gaurish Korpal}

\addaffiliation[country={Australia}
                ]{Queensland University of Technology}
\addaffiliation[country={USA}
                ]{University of Arizona}

\maketitle

\keywords{Proof systems, Composition, Pairing-friendly cycles, MNT curves}

\begin{abstract}
We give new constructions of cycles of pairing-friendly elliptic curves with a view towards unbounded recursive pairing-based proof systems. Unlike the only known prior cycle of elliptic curves - the \emph{ordinary} MNT cycle - our construction uses elliptic curves that are \emph{supersingular}. 

A trade-off of our approach is that the supersingular cycles are defined over extension fields (quadratic extensions in the optimal case), which makes elements and computations in $\G_1$ less compact and efficient than those in the MNT cycle. On the other hand, the supersingular cycles in this paper offer a key advantage over their MNT counterpart: every instance of our infinite family of supersingular curves can be efficiently constructed via Br{\"o}ker's algorithm, whereas it is only feasible to construct a relatively small, bounded number of MNT instances via the CM method. In other words, while both constructions give infinite families of pairing-friendly curves in theory, only the supersingular construction gives rise to infinite numbers of cycles that can be realised in practice. 

Supersingular cycles offer benefits that are relevant in the context of recursive pairing-based proof systems. They afford flexibility in the choices of underlying finite fields; one can choose primes $p$ for which the underlying field arithmetic is efficient and for which $p-1$ is divisible by a large power of 2. Or, as we study in detail, using supersingular cycles unlocks the possibility of connecting the cycle with other pairing-friendly elliptic curves that are defined over much smaller finite fields, where proof system arithmetic is much more efficient. Indeed, constructing these so-called \emph{lollipops} of pairing-friendly curves was the motivating problem (posed by researchers back in 2019) that inspired the present work. 
\end{abstract}

 \begin{textabstract}
 We give new constructions of cycles of pairing-friendly elliptic curves with a view towards unbounded recursive pairing-based proof systems. Unlike the only known prior cycle of elliptic curves - the \emph{ordinary} MNT cycle - our construction uses elliptic curves that are \emph{supersingular}. 

 The drawback of our approach is that the supersingular cycles are defined over extension fields (quadratic extensions in the optimal case), which makes elements and computations in $\mathbb{G}_1$ less compact and efficient than those in the MNT cycle. On the other hand, the supersingular cycles in this paper offer a key advantage over their MNT counterpart: every instance of our infinite family of supersingular curves can be efficiently constructed via Br{\"o}ker's algorithm, whereas it is only feasible to construct a relatively small, bounded number of MNT instances via the CM method. In other words, while both constructions give infinite families of pairing-friendly curves in theory, only the supersingular construction gives rise to infinite numbers of cycles that can be realised in practice. 

 Supersingular cycles offer benefits that are relevant in the context of recursive pairing-based proof systems. They afford flexibility in the choices of underlying finite fields; one can choose primes $p$ for which the underlying field arithmetic is efficient and for which $p-1$ is divisible by a large power of 2. Or, as we study in detail, using supersingular cycles unlocks the possibility of connecting the cycle with other pairing-friendly elliptic curves that are defined over much smaller finite fields, where proof system arithmetic is much more efficient. Indeed, constructing these so-called \emph{lollipops} of pairing-friendly curves was the motivating problem (posed by researchers back in 2019) that inspired the present work. 

 \end{textabstract}

\section{Introduction}\label{sec:intro}

Numerous constructions of \emph{2-cycles} of elliptic curves are now deployed as the foundation of succinct non-interactive arguments of knowledge (SNARKs) -- see~\cite{AHG22} for an extensive survey. Such $2$-cycles involve two elliptic curves, $E/\F_p$ and $\hat{E}/\F_q$, with $p \approx q$ such that $p = \#\hat{E}(\F_q)$ and $q=\#E(\F_p)$, and fall into one of three categories:

\begin{enumerate}[label=(\roman*)]
\item {\it Both $E$ and $\hat{E}$ are pairing-friendly.}\label{pairingcycle} These cycles were first proposed for use in scalable pairing-based SNARKs by Ben{-}Sasson, Chiesa, Tromer and Virza~\cite{scalable} and use instances of the only known cycle of ordinary pairing-friendly curves coming from the Miyaji-Nakabayashi-Takano (MNT) construction~\cite{MNT,DBLP:conf/ants/KarabinaT08}. For example, the Mina protocol~\cite{mina} was first built on top of a cycle of MNT curves. 
\vspace{0.3cm}
\item {\it One of $E$ and $\hat{E}$ is pairing-friendly.}\label{hybridcycle}  These \emph{hybrid} cycles can be readily constructed by taking any pairing-friendly curve $E/\F_p$ of prime order $q$, e.g. a Barreto-Naehrig (BN) curve~\cite{BNcurveViray}, and partnering it with the non-pairing-friendly\footnote{The only known exception here is when $E$ is a particular instance of an MNT curve, in which case $\hat{E}$ can also be pairing-friendly and the cycle would then fall into category~\ref{pairingcycle}.} curve $\hat{E}/\F_q$ of prime order $p$, which is not only guaranteed to exist, but necessarily has the same CM discriminant as $E$ (c.f.~\cite{silvermanstange}). Examples of hybrid cycles found in the wild are Hopwood's Pluto/Eris cycle~\cite{pluto-eris}, Meckler's BN382 cycle~\cite{BN382}, and Williamson's BN254/Grumpkin cycle~\cite{goblinplonk}.
\vspace{0.3cm}
\item {\it Neither $E$ nor $\hat{E}$ are pairing-friendly.}\label{nonpairingcycle} Proof systems that avoid the use of pairings altogether can still exploit the recursive composition afforded by a cycle; the Bulletproofs~\cite{bulletproofs} system is one such popular example. Non-pairing-friendly cycles found in the wild include Poelstra's secp/secq cycle~\cite{secpsecq}, Bowe, Grigg and Hopwood's Tweedledee/Tweedledum cycle~\cite{BoweGH19}, and Hopwood's Pasta cycle~\cite{pasta}.
\end{enumerate} 

The $2$-cycles in cases~\ref{hybridcycle} and~\ref{nonpairingcycle} above (where at least one of the curves is not pairing-friendly) are much easier to find and construct than the $2$-cycles in case~\ref{pairingcycle}. The instantiations falling into cases~\ref{hybridcycle} and~\ref{nonpairingcycle} that are cited above involve curves whose underlying field sizes are either optimally small, or else very close to it. On the other hand, the extra restrictions imposed by insisting that both $E$ and $\hat{E}$ are pairing-friendly make constructing the cycles in case~\ref{pairingcycle} notoriously difficult; beyond the MNT construction, there remain no known methods of constructing cycles of ordinary pairing-friendly elliptic curves~\cite{chiesa2019cycles,revisitingcycles,primepairs}.

The reason many SNARK instantiations opt for cycles where one or both of $E$ and $\hat{E}$ are pairing-friendly is that pairing-based proof systems offer a number of advantages over non-pairing-based proof systems. Groth's SNARK~\cite{Groth16}, often dubbed {\sf Groth16}, is perhaps the most ubiquitous pairing-based proof system in both the academic literature and in practical SNARK implementations. The reason is that the proof sizes in Groth's construction are \emph{constant}, i.e. are independent of the size of the witness/statement they are proving. On the other hand, pairing-free proof systems like Bulletproofs~\cite{bulletproofs} are currently (at best) logarithmic in the size of the witness. Depending on the target application, the resulting proof sizes in these protocols might be acceptable, but in terms of the succinctness property that is fundamental to the real-world appeal of SNARKs, {\sf Groth16} and its pairing-based variants remain unrivalled by their non-pairing-based counterparts. 

\paragraph{Drawbacks of the MNT cycle.} There are two main drawbacks of the MNT cycle.
\begin{enumerate}
\item \emph{Sparseness of MNT curves that can be constructed in practice.} The MNT cycle works with the primes $p=x^2-x+1$ and $q=x^2+1$ for some $x \in \Z$, but the MNT curves $E/\F_p$ and $\hat{E}/\F_q$ can only be constructed if the CM discriminant $D$, the squarefree part of $3x^2-2x+3$, is \emph{small} enough (say, less than $10^{17}$). Of all the $x$ values that correspond to $p$ and $q$ being prime, those that also correspond to a small discriminant $D$ are rather rare. As $x$ grows large, the probability that $3x^2-2x+3$ happens to be divisible be a square that is large enough to make $D<10^{17}$ becomes exponentially small. This is why MNT curves are constructed using special values of $x$, those which are found as the solutions of Pell equations. Every candidate value for $D$ gives rise to a new Pell equation, the solution of which can be used as a candidate $x$ value; if such an $x$ also corresponds to $p$ and $q$ being prime, then the CM method can construct the MNT curves $E/\F_p$ and $\hat{E}/\F_q$. The main problem, however, is that the solutions to Pell equations are extremely large in general. The chance of finding a solution $x$ that also happens to be the right size at a target security level is unlikely; finding a solution that additionally corresponds to primes $p$ and $q$ with properties that are desirable in the context of SNARKs (like efficient field arithmetic and high $2$-adicity) is practically out of the question~\cite[\S 3.2]{scalable}.

\item \emph{Small embedding degrees.}  The two curves in the MNT cycle have embedding degrees that are too small to balance the elliptic curve discrete logarithm problem (ECDLP) and the finite field discrete logarithm problem (DLP) at moderate levels of security. Thus, instantiating these cycles forces $E/\F_p$ and $\hat{E}/\F_q$ to be defined over finite fields whose sizes are \emph{much} larger than the sizes of fields that non-pairing-friendly curves could be defined over. The subsequent performance inefficiencies are the main reason why cycles of pairing-friendly curves have not seen widespread deployment in pairing-based proof systems. Even the Mina protocol~\cite{mina}, which was originally founded on a cycle of MNT curves, has since abandoned pairing-based proof systems in favor of a non-pairing-friendly cycle. 

The absence of optimal pairing-friendly \emph{cycles} has led many SNARK designers to instead opt for \emph{chains} of pairing-friendly curves~\cite{gepetto,DBLP:conf/eurocrypt/HousniG22,AHG22}. Such chains allow for composition of pairing-based proofs, but the number of times proofs can be composed recursively is strictly less than the number of curves in the chain. For example, most chains found in the literature are 2-chains~\cite{ElHousni20,DBLP:conf/eurocrypt/HousniG22}, which only allow for one round of proof composition. Again, $2$-chains may suffice for specific target applications, but such applications are a far cry from the sorts of scalable pairing-based proof systems that unbounded recursive composition would support~\cite{scalable}. 

\end{enumerate}

\paragraph{Our contributions.} This paper presents two main contributions that are relevant to the two MNT drawbacks mentioned above. 
\begin{enumerate}
\item \emph{New cycles of supersingular elliptic curves.} In Section~\ref{sec:superMNT} we show how cycles can be instantiated for \emph{all} $x \in \Z$ that give $p=x^2-x+1$ and $q=x^2+1$ as primes, irrespective of the CM discriminant of the MNT curves. Rather than constructing the ordinary MNT cycle of $E$ and $\hat{E}$ over $\F_{p}$ and $\F_q$ via the CM method, we construct a cycle of supersingular curves $\EC$ and $\hat{\EC}$ over extension fields using Br{\"o}ker's algorithm~\cite{broker}. There are four possible combinations of extension degrees that arise in our construction (depending on the value of $x \bmod{12}$ -- see Theorem~\ref{thm:main}), the optimal case of which is when $\EC/\F_{p^2}$ forms a cycle with  $\hat{\EC}/\F_{q^2}$, which we depict in Figure~\ref{fig:10mod12}. 

\begin{figure}[!ht]
		\centering
		\begin{tikzpicture}
	            \node (A) at (1,1) {$E(\F_p)$};
		   \node (B) at (3,1) {$\EC(\F_{p^2})$};
		   \node (C) at (5,1) {$\F_{p^4}^\times$};
	            \draw[gray, thin] [dashed] (A) to (B);
	            \draw[gray, thin] [dashed] (B) to (C);
	            \node (D) at (1,-1) {$\hat{E}(\F_q)$};
	            \node (E) at (3,-1) {$\hat{\EC}(\F_{q^2})$};
	            \node (F) at (7,-1) {$\F_{q^6}^\times$};
	            \draw[gray, thin] [dashed] (D) to (E);
	            \draw[gray, thin] [dashed] (E) to (F);
	            \draw[->, bend left=30] (A) to (D);
	            \draw[->, bend left=30] (D) to (A);
	            \draw[->, bend left=30] (B) to (E);
	            \draw[->, bend left=30] (E) to (B);
	        \end{tikzpicture}
	\caption{The ordinary MNT cycle vs. the supersingular cycle.}
	\label{fig:10mod12}

\end{figure}

Both the MNT cycle and the supersingular cycle in Figure~\ref{fig:10mod12} correspond to pairings for which the target groups $\mathbb{G}_T$ and $\hat{\mathbb{G}}_T$ are the same, i.e. $\F_{p^4}^\times$ and $\F_{q^6}^\times$, respectively. Moreover, the ECDLP problems in both cycles are of order-$p$ and of order-$q$, meaning that the overall security picture of both constructions is, to the best of current knowledge, equivalent. The main efficiency difference between the two constructions arises from the supersingular curves being defined over $\F_{p^2}$ and $\F_{q^2}$. This means that elements in the pairing groups $\mathbb{G}_1$ and $\hat{\mathbb{G}}_1$ will be twice as large, and that computations with these elements will be less efficient (note that there is no such penalty in $\mathbb{G}_2$ and $\hat{\mathbb{G}}_2$). 

The benefit of the supersingular construction is that we get an infinite family of pairing-friendly cycles, both in theory and in practice. Any $x \in \Z$ that gives $p=x^2-x+1$ and $q=x^2+1$ as primes gives rise to a cycle of supersingular curves that can be constructed and used in recursive proof systems. On the other hand, the MNT family of cycles is only infinite in theory; ordinary MNT curves \emph{exist} for all such $x$ above, but only a finite number of them can be constructed via the CM method. 

Overcoming the sparseness of MNT cycles unlocks new possibilities for recursive proof systems. The main possibility we explore in this paper is that of constructing lollipops of pairing-friendly elliptic curves, which are designed to partially overcome the second drawback of the MNT cycle. 

\item \emph{Lollipops of pairing-friendly curves.} One possibility that is unlocked by the supersingular cycles in this paper is the explicit construction of \emph{lollipops} of pairing-friendly elliptic curves, the notion of which was first conceived by Meckler in 2019~\cite{meckler}. In order to address the inefficiencies arising from the small embedding degrees in the MNT cycle (see above), he proposed constructing a chain of pairing-friendly curves that eventually leads into the MNT cycle. The idea is that the pairing-friendly curves in the chain can be defined over smaller fields and thus give rise to more efficient computations in the initial layer(s) of the proof system, while (beyond those layers) the MNT cycle still affords unbounded layers of recursive proof composition.

  \begin{figure}[!ht]
        \centering
        \subfloat[$2$-cycle\label{fig:2cycles}]{%
                \parbox[c][2cm][b]{0.3\linewidth}{ %% Adjust alignment
                        \centering
                        \begin{tikzpicture}
                                \node (A) at (0,1) {$\bullet$};
                                \node (B) at (0,0) {$\bullet$};
                                \draw[->, bend left=60] (A) to (B);
                                \draw[->, bend left=60] (B) to (A);
                        \end{tikzpicture}
                }
        }\hfil
	\subfloat[$2$-chain\label{fig:2chain}]{%
                \parbox[c][2cm][b]{0.3\linewidth}{%
                        \centering   
                        \begin{tikzpicture}
                                \node (A) at (0,1) {$\bullet$};
                                \node (B) at (0,0) {$\bullet$};
                                \draw[->] (B) to (A);
                        \end{tikzpicture}
                }
        }\hfil
	\subfloat[$(2,2)$-lollipop\label{fig:22lollipop}]{%
                \parbox[c][2cm][b]{0.3\linewidth}{%
                        \centering
                        \begin{tikzpicture}
                                \node (A) at (0,2) {$\bullet$};
                                \node (B) at (0,1) {$\bullet$};
                                \node (C) at (0,0) {$\bullet$};
                                \draw[->, bend left=60] (A) to (B);
                                \draw[->, bend left=60] (B) to (A);
                                \draw[->] (C) to (B);
                        \end{tikzpicture}
                }
        }
        \caption{A $2$-cycle, a $2$-chain, and their combination: a $(2,2)$-lollipop.}
	\label{fig:all3}
    \end{figure}

In Section~\ref{sec:constructing} we give a construction of pairing-friendly $(2,2)$-lollipops\footnote{In general, a $(m,n)$-lollipop would have $m$ curves in the ``stick'' and $n$ curves in the cycle (one curve is common to both the stick and the cycle), but we will focus on $(2,2)$-lollipops in this paper.}, which are $2$-chains that are connected to $2$-cycles. In Figure~\ref{fig:all3} we depict the differences between these three possibilities: a `$\bullet$' represents a pairing-friendly curve, and (here and throughout the paper) an arrow $\rightarrow$ pointing from $A$ to $B$ means $B$ is pairing-friendly with respect to the (characteristic of the) field of definition of $A$.  

It is worth noting that, in 2019, as part of the ``Coda+Dekrypt SNARK challenge''~\cite{coinlist}, the founders of the Coda protocol~\cite{coda} (now called the Mina protocol\footnote{Mina~\cite{mina} uses recursive composition of pairing-based proofs so that ``Users only need to check a singular, recursive `Proof of Everything'".}) offered a cash prize of \$20,000 USD for the highest quality submission of pairing-friendly lollipops. The Coda+Dekrypt SNARK challenge has since expired, but to the best of our knowledge the call for lollipops went unanswered. In Section~\ref{sec:examples} we present the 18 lollipops we found using the method we describe in Section~\ref{sec:constructing}. These lollipops offer between 80 and 128 bits of security and, to our knowledge, are the only pairing-friendly lollipops that have been found to date. 
\end{enumerate} 

\paragraph{Future work.} The present article is entirely constructive, and as such we do not make any concrete performance comparisons between the supersingular and MNT cycles. Future work will aim to instantiate and implement the optimal supersingular construction from Section~\ref{sec:superMNT}, in order to benchmark the performance trade-offs between the two cycles. As we mentioned above, the sole drawback of the supersingular cycles is that the source group $\G_1$ is defined over $\F_{p^2}$ and the source group $\hat{\G}_1$ is defined over $\F_{q^2}$; this will not only affect the performance of operations in $\G_1$ and $\hat{\G}_1$, but also the proof sizes themselves. On the other hand, MNT curves are limited in the optimisations they can exploit from the literature, both in the elliptic curve groups $\G_1$, $\G_2$, $\hat{\G}_1$ and $\hat{\G}_2$, and in the pairing computations. Our optimal construction typically finds $\EC/\F_{p^2}$ with $j(\EC)=1728$ and $\hat{\EC}/\F_{q^2}$ with $j(\hat{\EC})=0$, meaning that twists of degree larger than 2 can be used to compress and accelerate computations in both $\G_2$ and $\hat{\G}_2$. This, combined with the flexibility of choosing more performant field arithmetic in our construction, gives us hope that we can make up for the $\G_1$ and $\hat{\G}_1$ drawbacks elsewhere in a SNARK implementation. Finally, it is worth mentioning that the theoretical minimum of the number of group elements per proof is two~\cite{Groth16}, and that the only known way of achieving this lower bound is in the setting of supersingular pairings.

\section{Preliminaries}\label{sec:prelims}

For $n \in \Z_{>0}$, the $n$-th \emph{cyclotomic polynomial} $\Phi_n(x)$ is given by 
\[\Phi_n(x) = \dfrac{x^n-1}{\prod_{\substack{d\mid n \\ d<n}}\Phi_d(x)}.\]
Throughout this paper we will commonly make use of the following lemma.
\begin{lemma}[{\cite[Lemma 2.9]{WashingtonCyclotomic}}]\label{order-lemma}
    Let $p$ be a prime and $n,a\in \Z_{>0}$ such that $p\nmid na$. Then $\ord_p(a)=n$ iff $p\mid \Phi_n(a)$.
\end{lemma}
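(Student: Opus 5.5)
We have to prove Lemma~\ref{order-lemma}: for a prime $p$ and $n,a\in\Z_{>0}$ with $p\nmid na$, we have $\ord_p(a)=n$ if and only if $p\mid\Phi_n(a)$. We work in $\F_p$ with $\bar a = a \bmod p$, which is nonzero since $p\nmid a$. The basic tool is the factorisation $x^n-1=\prod_{d\mid n}\Phi_d(x)$, which follows from the recursive definition by induction on $n$. Here each $\Phi_d(x)\in\Z[x]$ is monic, again by induction and monic polynomial division.

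\emph{Forward direction.} Suppose $\ord_p(a)=n$. Then $\bar a^n=1$, so $p\mid a^n-1=\prod_{d\mid n}\Phi_d(a)$. Since $p$ is prime, $p\mid\Phi_d(a)$ for some $d\mid n$. If $d<n$, then $p\mid\Phi_d(a)\mid a^d-1$, so $\bar a^d=1$, contradicting $\ord_p(a)=n$. Hence $d=n$.

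\emph{Converse.} Suppose $p\mid\Phi_n(a)$. Then $p\mid a^n-1$, so $m:=\ord_p(a)$ divides $n$. Assume for contradiction that $m<n$. By the forward direction applied with $m$ (note $p\nmid m$ since $m\mid n$), we get $p\mid\Phi_m(a)$. Then $\Phi_m$ and $\Phi_n$, which are distinct factors of $x^n-1$, both vanish at $\bar a$. So $(x-\bar a)^2$ divides $x^n-1$ in $\F_p[x]$, meaning $\bar a$ is a repeated root of $x^n-1$.

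The main obstacle is ruling out this repeated root, and it is exactly where the hypothesis $p\nmid n$ enters. The derivative of $x^n-1$ is $nx^{n-1}$. At $\bar a$ this equals $n\bar a^{n-1}$, which is nonzero in $\F_p$ because $p\nmid n$ and $\bar a\neq0$. So $x^n-1$ is separable over $\F_p$ and has no repeated roots. This contradiction forces $m=n$, completing the proof.
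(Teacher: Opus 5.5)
Your proof is correct and is essentially the standard argument behind the cited result. The paper gives no proof of its own, only the reference to Washington's Lemma~2.9, and that argument runs as yours does: factor $x^n-1=\prod_{d\mid n}\Phi_d(x)$, get the forward direction from the primality of $p$, and for the converse use that $p\nmid na$ makes $x^n-1$ separable over $\F_p$, so $a \bmod p$ cannot be a common root of $\Phi_m$ and $\Phi_n$ with $m\mid n$, $m<n$.

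The one glib step is the claim that $\Phi_n\in\Z[x]$ follows by induction and monic division. You also need the division to be exact, for example by identifying the roots of $\Phi_d$ as the primitive $d$-th roots of unity; this is standard background that the paper also takes for granted.
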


\paragraph{Pairing-friendly elliptic curves.} For an extensive survey on pairing-friendly curves, we refer to~\cite{taxonomy}. Let $E/\F_{p^u}$ be an elliptic curve and $r$ be a large prime such that $r \mid \#E(\F_{p^u})$. Then $E$ is said to have \emph{embedding degree} $k$ (with respect to $r$) if $\ord_r(p^u)=k$. Since $r \gg k$, we can use Lemma~\ref{order-lemma} to deduce that $E$ has embedding degree $k$ iff $r \mid \Phi_k(p^u)$. If $k$ is \emph{small} enough, e.g. $k \leq 50$, then $E$ is said to be \emph{pairing-friendly}. All pairing-friendly curves in this paper have $k \leq 6$. 

\paragraph{The CM method.} Hasse's theorem~\cite{hasse1936theorie} states that the number of points on an elliptic curve $E/\F_{p^u}$ is $\#E(\F_{p^u})=p^u+1-t$, where the \emph{trace of Frobenius} $t$ is bounded by $|t| \leq 2p^{u/2}$. On input of a given $t$ within the Hasse interval, we can construct a curve (i.e. compute its coefficients) with $p^u+1-t$ rational points via the \emph{complex multiplication} (CM) method~\cite[\S 4]{Morain91}. Write 
\begin{align}\label{eq:CM}
DV^2=4p^u-t^2,
\end{align}
where $D, V \in \Z$, and where $D$ is \emph{squarefree}. The CM method finds $E$ by computing the \emph{Hilbert class polynomial} $H_{D}(X) \in \F_{p^u}[X]$, the roots of which correspond to $j$-invariants of elliptic curves whose CM \emph{discriminant} is $D$. If $j \in \F_{p^u}$ is such that $H_D(j)=0$, then we can write $E$ as (a quadratic twist of) $E \colon y^2=x^3+ax-a$, where $a=-27j/(4(j-1728))$; the only exceptions are $j=0$, in which case we take $E$ as (a quartic twist of) $E \colon y^2=x^3+1$, and $j=1728$, in which case we take $E$ as (a sextic twist of) $E \colon y^2=x^3+x$. For details concerning twists, see~\cite[Proposition X.5.4]{silverman2009arithmetic}.

When $u=1$, elliptic curves \emph{exist} for all $t$ with $|t| \leq 2\sqrt{p}$. When $u>1$, there are fewer than $u$ values of $t$ which do not correspond to an elliptic curve~\cite[Theorem 4.2]{Schoof87}. Either way, when $p^u$ is of cryptographic size, only a tiny fraction of these $t$ values correspond to curves that can actually be constructed via the CM method. This is because the computation of the Hilbert class polynomial becomes infeasible if $D$ is large. The time complexity of the best known algorithm for computing $H_D(X)$ is in $\tilde{O}(|D|)$~\cite{Sutherland12}, and current record CM computations\footnote{See~\url{https://math.mit.edu/~drew/CMRecords.html}.} have $|D| < 10^{17}$. For $q$ of cryptographic size, most traces $t$ in the Hasse interval will correspond to a discriminant $D$ in~\eqref{eq:CM} that far exceeds those in these record computations. In the next subsection we will show how Pell equations can be used to find the special values of $t$ that \emph{do} correspond to sizes of $D$'s that make the CM method feasible. 

\paragraph{Finding MNT cycles with small CM discriminants.} Recall from Section~\ref{sec:intro} that $2$-cycles of MNT curves are defined by taking large primes 
\begin{align}\label{eq:MNTparams}
p=\Phi_6(x)=x^2-x+1 \quad \quad {\rm and} \quad\quad q=\Phi_4(x)=x^2+1 
\end{align}
 for some $x \in \Z$. The MNT curve $E/\F_p$ has trace $t_E=-x+1$, which gives $\#E(\F_p)=p+1-t_E = x^2+1=q$, while the MNT curve $\hat{E}/\F_q$ has trace $t_{\hat{E}}=x+1$, which gives $\#\hat{E}(\F_q)=q+1-t_{\hat{E}} = x^2-x+1=p$. It follows from~\eqref{eq:MNTparams} that $\ord_{q}(p)=4$ and $\ord_{p}(q)=6$, so $E$ has embedding degree 4 and $\hat{E}$ has embedding degree 6. In both cases, substitution into~\eqref{eq:CM} yields 
\begin{align}\label{eq:CMMNT}
DV^2=3x^2-2x+3. 
\end{align}
Over all instances of $x \in \Z$ that correspond to $p$ and $q$ in~\eqref{eq:MNTparams} being prime, we expect that the vast majority give rise to $D \approx p \approx q$. Put another way, if $x$ is chosen at random from a large interval, we cannot expect the value of $3x^2-2x+3$ from~\eqref{eq:CMMNT} to contain a large square factor $V$, and thus $D = O(x^2)$ in most cases. 

The way MNT curves are constructed is to instead compute the few values of $x$ that correspond to small values of $D$. Putting $U=3x-1$ into~\eqref{eq:CMMNT} yields the \emph{generalised Pell equation}
\begin{align}\label{eq:Pell}
U^2-3DV^2 &= -8.
\end{align}
Solving~\eqref{eq:Pell} for a small, fixed value of $D$ yields the pair $(U,V) \in \Z^2$. If $U \in 2+3\Z$, then we can take $x=(U+1)/3$ and proceed by checking if $p=x^2-x+1$ and $q=x^2+1$ are prime. If they are, then we can use the CM method to construct the MNT curves $E/\F_p$ and $\hat{E}/\F_q$. 

The reason we must solve many (e.g. millions of) Pell equations to find suitable MNT parameters is that the solutions $(U,V)$ to~\eqref{eq:Pell} are unlikely to be the size we want at a given security level. Even in the cases where the solutions are of the right size, it is unlikely that both $U \in 2+3\Z$ and the corresponding values of $p$ and $q$ are both prime. 

In the next section we show that cycles can still be constructed over fields of characteristic $p$ and $q$ regardless of the CM discriminant $D$, by instead exploring a supersingular construction.

\section{New cycles of supersingular elliptic curves}\label{sec:superMNT}

Recall from~\eqref{eq:MNTparams} that the MNT cycle has 
\begin{align*}
p=x^2-x+1 \quad \quad {\rm and} \quad\quad q=x^2+1,
\end{align*}
and is such that 
\begin{align}\label{eq:ordpq}
{\rm ord}_q(p)=4 \quad \quad {\rm and} \quad\quad {\rm ord}_p(q)=6.
\end{align}

The ordinary curves $E/\F_p$ and $\hat{E}/\F_q$ are such that $\#E(\F_p)=q$ and $\#\hat{E}(\F_q)=p$; they can only be constructed by the CM method for those values of $x$ which give rise to a small enough $D$ in~\eqref{eq:CMMNT}. In this section we show an alternative cycle construction over $\overline{\F}_p$ and $\overline{\F}_q$ that uses supersingular curves. These supersingular curves have $j$-invariants in $\F_p$ and $\F_q$, but the disadvantage of invoking the supersingular construction is that the respective $q$- and $p$-torsion points (that define the cycle) only become rational over extension fields. The crucial advantage, however, is that this construction works for \emph{all} values of $x$ that give rise to $p$ and $q$ as primes.

Cycles of supersingular elliptic curves were recently given in~\cite{SantosCN24}, but the cycles we introduce here have an important difference. Those in~\cite{SantosCN24} set $q \equiv 1 \bmod{p}$ so that any curve defined over $\F_q$ is forced to have embedding degree 1 with respect to $p$. Furthermore, the construction in~\cite{SantosCN24} also forced the bitlength of $q$ to always be (at least) twice the bitlength of $p$. In what follows we instead exploit the relationship between the special $p$ and $q$ in the MNT construction to avoid these impositions; our supersingular cycles have $p \approx q$ and allow for both curves to have embedding degrees greater than 1, as we see in Theorem~\ref{thm:main}.

We start by specialising the definition of a pairing-friendly $n$-cycle (see Section~\ref{sec:constructing}) to the case of $n=2$. 

\begin{definition}[Pairing-friendly $2$-cycle]\label{def:cycle}
We say that two elliptic curves $\EC/{\F_{p^u}}$ and $\hat{\EC}/{\F_{q^v}}$ are a \emph{pairing-friendly $2$-cycle}, denoted 
\[\EC \rightleftharpoons \hat{\EC},\]
if
\begin{enumerate}[label = (\roman*)]
\item $p \mid \#\hat{\EC}(\F_{q^v})$; \label{dec:pdiv}
\item $q \mid \#\EC(\F_{p^u})$; \label{dec:qdiv} 
\item $\EC$ is pairing-friendly with respect to $q$; and \label{def:pfq}
\item $\hat{\EC}$ is pairing-friendly with respect to $p$. \label{def:pfp}
\end{enumerate}
\end{definition}

\begin{figure}[!ht]
	\begin{minipage}[b]{0.24\linewidth}
		\centering
		\begin{tikzpicture}
	            \node (A) at (-1,0) {$E(\F_p)$};
		   \node (B) at (-1,1) {$\EC(\F_{p^2})$};
		   \node (C) at (-1,3) {$\F_{p^4}^\times$};
	            \draw[gray, thin] [dashed] (A) to (B);
	            \draw[gray, thin] [dashed] (B) to (C);
	            \node (D) at (1,0) {$\hat{E}(\F_q)$};
	            \node (E) at (1,1) {$\hat{\EC}(\F_{q^2})$};
	            \node (F) at (1,5) {$\F_{q^6}^\times$};
	            \draw[gray, thin] [dashed] (D) to (E);
	            \draw[gray, thin] [dashed] (E) to (F);
	            \draw[->, bend left=30] (A) to (D);
	            \draw[->, bend left=30] (D) to (A);
	            \draw[->, bend left=30] (B) to (E);
	            \draw[->, bend left=30] (E) to (B);
	        \end{tikzpicture}
		$ x \equiv 10 \bmod{12}$
	\end{minipage}
\begin{minipage}[b]{0.24\linewidth}
		\centering
		\begin{tikzpicture}
	            \node (A) at (-1,0) {$E(\F_p)$};
		   \node (B) at (-1,1) {$\EC(\F_{p^2})$};
		   \node (C) at (-1,3) {$\F_{p^4}^\times$};
	            \draw[gray, thin] [dashed] (A) to (B);
	            \draw[gray, thin] [dashed] (B) to (C);
	            \node (D) at (1,0) {$\hat{E}(\F_q)$};
	            \node (E) at (1,2) {$\hat{\EC}(\F_{q^3})$};
	            \node (F) at (1,5) {$\F_{q^6}^\times$};
	            \draw[gray, thin] [dashed] (D) to (E);
	            \draw[gray, thin] [dashed] (E) to (F);
	            \draw[->, bend left=30] (A) to (D);
	            \draw[->, bend left=30] (D) to (A);
	            \draw[->, bend left=30] (B) to (E);
	            \draw[->, bend left=30] (E) to (B);
	        \end{tikzpicture}
		$ x \equiv 6 \bmod{12}$
	\end{minipage}
	\begin{minipage}[b]{0.24\linewidth}
		\centering
		\begin{tikzpicture}
	            \node (A) at (-1,0) {$E(\F_p)$};
		   \node (B) at (-1,3) {$\EC(\F_{p^4})$};
	            \draw[gray, thin] [dashed] (A) to (B);
	            \node (D) at (1,0) {$\hat{E}(\F_q)$};
	            \node (E) at (1,1) {$\hat{\EC}(\F_{q^2})$};
	            \node (F) at (1,5) {$\F_{q^6}^\times$};
	            \draw[gray, thin] [dashed](D) to (E);
	            \draw[gray, thin] [dashed] (E) to (F);
	            \draw[->, bend left=30] (A) to (D);
	            \draw[->, bend left=30] (D) to (A);
	            \draw[->, bend left=30] (B) to (E);
	            \draw[->, bend left=30] (E) to (B);
	        \end{tikzpicture}
		$ x \equiv 4 \bmod{12}$
	\end{minipage}
	\begin{minipage}[b]{0.24\linewidth}
		\centering
		\begin{tikzpicture}
	            \node (A) at (-1,0) {$E(\F_p)$};
		   \node (B) at (-1,3) {$\EC(\F_{p^4})$};
	            \draw[gray, thin] [dashed] (A) to (B);
	            \node (D) at (1,0) {$\hat{E}(\F_q)$};
	            \node (E) at (1,2) {$\hat{\EC}(\F_{q^3})$};
	            \node (F) at (1,5) {$\F_{q^6}^\times$};
	            \draw[gray, thin] [dashed](D) to (E);
	            \draw[gray, thin] [dashed] (E) to (F);
	            \draw[->, bend left=30] (A) to (D);
	            \draw[->, bend left=30] (D) to (A);
	            \draw[->, bend left=30] (B) to (E);
	            \draw[->, bend left=30] (E) to (B);
	        \end{tikzpicture}
		$ x \equiv 0 \bmod{12}$
	\end{minipage}
	\caption{The ordinary MNT cycle vs. the four supersingular cycles in Theorem~\ref{thm:main}. Further explanation in text.}
	\label{fig:superMNT}
\end{figure}

In Figure~\ref{fig:superMNT} giving we give a depiction of the constructions that follow in Theorem~\ref{thm:main}. The difference in the four constructions is the field of definition of the curves $\EC$ and $\hat{\EC}$. These differences arise based on the values of $p \bmod{12}$ and $q \bmod{12}$, because Waterhouse's theorem~\cite[Theorem 4.1]{waterhouse} gives precise conditions on the (non-)existence of various supersingular curves that depends on these values. With $p=x^2-x+1$ and $q=x^2+1$, it turns out that (for all $x \neq 2$) there are only 4 classes of $x \in \Z/12\Z$ that can give both $p$ and $q$ prime; odd $x$ gives even $q$ and $x \in \{2 + 12 \Z, 8 +12 \Z\}$ gives $p$ a multiple of $3$.\footnote{We note for completeness that, when $x=2$, we still get a cycle with $p=3$ and $q=5$. There is a supersingular curve $\EC/\F_{3^2}$ with $j(\EC)=1728$ and $\#\EC(\F_{3^2})=2q$, and a curve $\hat{\EC}/\F_{q}$ with $j(\hat{\EC})=0$ and $\#\hat{\EC}(\F_{5})=2p$. Both curves have embedding degree $2$.} This is why both of the propositions below start with the statement of the classes of $x \in \Z/12\Z$.

\begin{theorem}\label{thm:main}
Let $x \in \Z$ be such that be such that $p = x^2-x+1$ and $q=x^2+1$ are prime. Then there exist two supersingular curves 
\[\EC/\F_{p^u} \quad\quad { and} \quad\quad \hat{\EC}/\F_{q^v}\]
with $u \in \{2,4\}$ and $v \in \{2,3\}$ such that
\[\EC \rightleftharpoons \hat{\EC},\]
and such that the target group of order-$q$ Weil pairing on $\EC$ is $\F_{p^4}^\times$, and such that the order-$p$ Weil pairing on $\hat{\EC}$ is $\F_{q^6}^\times$.  
Moreover, the four possibilities for $(u,v)$ arising from $u \in \{2,4\}$ and $v \in \{2,3\}$ correspond to the four possible classes of $x \in \Z/12\Z$ leading to prime numbers $p$ and $q$. In particular
\begin{enumerate}
\item if $x\equiv 10\bmod {12}$ then $(u,v) = (2,2)$; 
\item if $x\equiv 6\bmod {12}$ then $(u,v) = (2,3)$;
\item if $x\equiv 4\bmod {12}$ then $(u,v) = (4,2)$; and 
\item if $x\equiv 0\bmod {12}$ then $(u,v) = (4,3)$.
\end{enumerate}
 
\end{theorem}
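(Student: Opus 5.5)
The plan is to rely entirely on the classification of the possible traces of supersingular curves over $\F_{\ell^n}$, namely Waterhouse's theorem~\cite[Theorem 4.1]{waterhouse}. For each side of the cycle, I would pick the trace that makes the group order divisible by the other prime, guided by \eqref{eq:ordpq}, and then read off the embedding degree from Lemma~\ref{order-lemma}. Existence is all that is claimed. Constructibility (via Br\"oker's algorithm or twisting) can be remarked on afterwards.

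\textbf{The curve $\EC$ (characteristic $p$).} Since $\ord_q(p)=4$, Lemma~\ref{order-lemma} gives $q\mid\Phi_4(p)=p^2+1$, and I will aim for this factor.
\begin{itemize}
\item For $u=2$, the supersingular trace $t=0$ over $\F_{p^2}$ gives $\#\EC(\F_{p^2})=p^2+1$. By Waterhouse this trace exists iff $p\not\equiv 1 \bmod 4$. Since $p=x^2-x+1$, we have $p\equiv 3\bmod 4$ when $x\equiv 2\bmod 4$, i.e.\ when $x\equiv 6,10 \bmod 12$.
\item When $x\equiv 0,4\bmod 12$ we have $p\equiv 1\bmod 4$, so $t=0$ is unavailable over $\F_{p^2}$. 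The traces $\pm p$ give $\Phi_3(p)$ or $\Phi_6(p)$, and neither is divisible by $q$. So I pass to $u=4$. A supersingular $E_0/\F_p$ with trace $0$ always exists; its Frobenius eigenvalues are $\pm\sqrt{-p}$. Over $\F_{p^4}$ it therefore has trace $2p^2$, and its quadratic twist over $\F_{p^4}$ has trace $-2p^2$. The twist has order $(p^2+1)^2$, which is divisible by $q$.
\end{itemize}
The embedding degree is $\ord_q(p^u)$. This equals $2$ for $u=2$ and $1$ for $u=4$, so in both cases the order-$q$ Weil pairing lands in $\F_{p^4}^\times$.

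\textbf{The curve $\hat{\EC}$ (characteristic $q$).} Since $\ord_p(q)=6$, we have $p\mid\Phi_6(q)=q^2-q+1$.
\begin{itemize}
\item For $v=2$, trace $t=q$ gives order exactly $q^2-q+1$. Waterhouse allows $t=\pm q$ over $\F_{q^2}$ iff $q\not\equiv 1\bmod 3$. Now $q=x^2+1\equiv 2\bmod 3$ exactly when $3\nmid x$, i.e.\ for $x\equiv 4,10\bmod 12$.
\item For $x\equiv 0,6\bmod 12$ we have $q\equiv 1\bmod 3$, and I take $v=3$. For odd exponent and $q>3$, Waterhouse leaves only $t=0$. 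This gives $\#\hat{\EC}(\F_{q^3})=q^3+1=(q+1)(q^2-q+1)$, which is divisible by $p$. Such a curve exists for every $q$, for instance as the base change of a trace-zero curve over $\F_q$.
\end{itemize}
The embedding degree is $\ord_p(q^2)=3$ for $v=2$ and $\ord_p(q^3)=2$ for $v=3$. Either way the order-$p$ pairing lands in $\F_{q^6}^\times$.

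\textbf{Combining.} Conditions \ref{dec:pdiv}--\ref{def:pfp} of Definition~\ref{def:cycle} then hold, and the case split on $x\bmod 4$ and $x\bmod 3$ assembles into the four residues modulo $12$ listed in the statement. The excluded residues (odd $x$, and $x\equiv 2,8\bmod 12$) were already ruled out before the theorem.

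The step needing the most care is applying Waterhouse's conditions correctly. This means the exact congruence conditions for $t=0$ and $t=\pm\sqrt{Q}$ in even degree, and ruling out the alternative traces. The latter shows why the extension degree genuinely must rise to $4$ or $3$, rather than merely permitting it.
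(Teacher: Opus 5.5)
Your proposal is correct and follows essentially the same route as the paper. In each residue class you use Waterhouse's classification to obtain supersingular curves of orders $\Phi_4(p)$, $\Phi_4(p)^2$, $\Phi_6(q)$ and $(q+1)\Phi_6(q)$, and you get divisibility and the embedding degrees from $\ord_q(p)=4$ and $\ord_p(q)=6$. The differences are cosmetic: you build the $\F_{p^4}$ and $\F_{q^3}$ curves explicitly by base change and twisting, and you compute $\ord_q(p^u)$ and $\ord_p(q^v)$ directly where the paper cites Waterhouse and Galbraith's Theorem IX.20; your remarks on why $u$ and $v$ must rise go slightly beyond what the theorem asserts.
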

\begin{proof}
We prove each of the requirements in Definition~\ref{def:cycle} for each case. Note that applying Lemma~\ref{order-lemma} to equation~\eqref{eq:ordpq} implies that $p\mid \Phi_6(q)$ and $q\mid \Phi_4(p)$. Next, we observe that:
\begin{enumerate}[label = {(\alph*)}]
    \item If $p \equiv 7\mod 12$, then by~\cite[Theorem 4.1(5)(ii)]{waterhouse}  there exists a supersingular curve $\EC/\F_{p^2}$ with $\#\EC(\F_{p^2}) = p^2+1$. Then $q\mid \#\EC(\F_{p^2})$ because $\#\EC(\F_{p^2})=\Phi_4(p)$ and $q \mid \Phi_4(p)$. \label{case:p7}
    \item If $q\equiv 5\mod 12$, then by~\cite[Theorem 4.1(3)]{waterhouse} there exists $\hat{\EC}/\F_{q^2}$ with $q^2+1-q$ rational points. Then $p\mid \#\hat{\EC}(\F_{q^2})$ because $\#\hat{\EC}(\F_{q^2})=\Phi_6(q)$ and $p \mid \Phi_6(q)$. \label{case:q5}
    \item If $q\equiv 1\mod 12$, then by~\cite[Theorem 4.1(5)(i)]{waterhouse} there exists $\hat{\EC}/\F_{q^3}$ with $q^3+1$ rational points. Then $p\mid \#\hat{\EC}(\F_{q^3})$ because $\#\hat{\EC}(\F_{q^3})=(q+1)\Phi_6(q)$ and $p \mid \Phi_6(q)$. \label{case:q1}
    \item If $p \equiv 1\mod 12$, then by~\cite[Theorem 4.1(2)]{waterhouse} there exists a supersingular curve $\EC/\F_{p^4}$ with $\#\EC(\F_{p^4}) = p^4+1+2p^2$. Then $q\mid \#\EC(\F_{p^4})$ because $\#\EC(\F_{p^4})=(\Phi_4(p))^2$ and $q \mid \Phi_4(p)$. \label{case:p1}
\end{enumerate}
We can now prove~\ref{dec:pdiv} and~\ref{dec:qdiv} of Definition~\ref{def:cycle} for all four cases.
\begin{enumerate}
    \item Here $p \equiv 7\mod 12$ and $q\equiv 5\mod 12$, hence the proof follows from \ref{case:p7} and \ref{case:q5}.
    \item Here $p \equiv 7\mod 12$ and $q\equiv 1\mod 12$, hence the proof follows from \ref{case:p7} and \ref{case:q1}.
    \item Here $p \equiv 1\mod 12$ and $q\equiv 5\mod 12$, hence the proof follows from \ref{case:p1} and \ref{case:q5}.
    \item Here $p \equiv 1\mod 12$ and $q\equiv 1\mod 12$, hence the proof follows from \ref{case:p1} and \ref{case:q1}.
\end{enumerate}
Finally, the embedding degrees follow directly from~\cite[Theorem IX.20]{GalbraithBSS}, which also proves~\ref{def:pfq} and~\ref{def:pfp} of Definition~\ref{def:cycle} for all four cases.
\end{proof}

\paragraph{Optimality of Theorem~\ref{thm:main}.} We now present Remark~\ref{rem:q} and Corollary~\ref{coll}, which shows that Theorem~\ref{thm:main} is optimal in the following sense: if $p$ and $q$ are parameterized as in~\eqref{eq:ordpq}, then there is no $2$-cycle of supersingular elliptic curves that allows us to work over higher degree extension fields than those presented in Theorem~\ref{thm:main}. 

One might hope to find a supersingular cycle $\EC/\F_{p^u}$ and $\hat{\EC}/\F_{q^v}$ with, say, $u>4$ and $v>6$, for which the target groups of the order-$q$ and order-$p$ Weil pairings are larger than $\F_{p^4}^\times$ and $\F_{q^6}^\times$; this would allow more balanced ECDLP and DLP complexities so that the sizes of $p$ and $q$ can be smaller. Unfortunately, it turns out that this hope is a lost cause; in Remark~\ref{rem:q} we give five examples of fields larger than $\F_{p^4}$ or $\F_{q^3}$ that could have been used as the fields of definition in Theorem~\ref{thm:main}, each time showing that none of the resulting constructions will have DLP security governed by fields larger than $\F_{p^4}^\times$ and $\F_{q^6}^\times$. In Corollary~\ref{coll} we prove that the trend observed in these examples is true in general. 

\begin{remark}\label{rem:q}
    We note a handful of example constructions we tried in the hope of reducing the sizes of $p$ and $q$, before formulating Corollary~\ref{coll} below. 
    \begin{enumerate}[label={(\Roman*)}]
        \item For $q\equiv 1,5\mod 12$, there exists $\hat{\EC}/\F_{q^6}$ with $p\mid \#\hat{\EC}(\F_{q^6}) = q^6 + 1 + 2q^3 = (q+1)^2\Phi_6(q)^2$~\cite[Theorem 4.1(2)]{waterhouse}. Moreover, $\hat{\EC}[p]\subseteq \hat{\EC}(\F_{q^6})$~\cite[Theorem IX.20]{GalbraithBSS}. However, we did not choose this curve for \ref{case:q5} or \ref{case:q1} in the proof of Theorem~\ref{thm:main} because it will offer the same level of security as the one we considered over lower degree field extension. \label{extra1}
        \item For $q\equiv 1,5\mod 12$, there exists $\hat{\EC}/\F_{q^{12}}$ with $p\mid \#\hat{\EC}(\F_{q^{12}}) = q^{12} + 1 - 2q^6 = (q^6-1)^2=(q^2-1)^2\Phi_6(-q)^2\Phi_6(q)^2$~\cite[Theorem 4.1(2)]{waterhouse}. Moreover, $\hat{\EC}[p]\subseteq \hat{\EC}(\F_{q^{12}})$~\cite[Theorem IX.20]{GalbraithBSS}. However, it is $\overline{\F}_{q}$-isogenous to~\ref{extra1} above.
        \item For $q\equiv 5\mod 12$, there exists $\hat{\EC}/\F_{q^4}$ with $p\mid \#\hat{\EC}(\F_{q^4}) = q^4 + 1 + q^2 = \Phi_6(-q)\Phi_6(q)$~\cite[Theorem 4.1(3)]{waterhouse}. Moreover, $\hat{\EC}[p]\subseteq \hat{\EC}(\F_{q^{12}})$~\cite[Theorem IX.20]{GalbraithBSS}. However, it is $\overline{\F}_{q}$-isogenous to~\ref{case:q5} in the proof of Theorem~\ref{thm:main}.
	\item For $p\equiv 1,7\mod 12$, there exists $\EC/\F_{p^{8}}$ with $q\mid \#\EC(\F_{p^8}) = p^8 + 1 - 2p^4 = (p^4-1)^2=(p^2-1)^2\Phi_4(p)^2$~\cite[Theorem 4.1(2)]{waterhouse}. Moreover, $\EC[q]\subseteq \EC(\F_{p^{8}})$~\cite[Theorem IX.20]{GalbraithBSS}. However, it is $\overline{\F}_{p}$-isogenous to~\ref{case:p1} in the proof of Theorem~\ref{thm:main}.
	\item     We can use~\cite[Theorem 4.1]{waterhouse} and~\cite[Proposition 3.7]{Schoof87} to show existence of a supersingular cycle consisting of $\EC/\F_{p^{10}}$ with $q\mid \#\EC(\F_{p^{10}}) = p^{10} + 1 = \Phi_4(p)(p^8-p^6+p^4-p^2+1)$ and $\EC[q] \subseteq \EC(\F_{p^{20}})$; and $\hat{\EC}/\F_{q^{9}}$ with $p\mid \hat{\EC}(\F_{q^{9}}) = q^9+1 = \Phi_6(q)(q+1)(q^6-q^3+1)$ and $\hat{\EC}[p] \subseteq \hat{\EC}(\F_{q^{18}})$. However, such a cycle will offer the same level of security as the cycle of isogenous curves $\EC'/\F_{p^{2}}$ with $q\mid \#\EC'(\F_{p^{2}}) = p^{2} + 1 = \Phi_4(p)$ and $\EC'[q] \subseteq \EC'(\F_{p^{4}})$; and $\hat{\EC'}/\F_{q^{3}}$ with $p\mid \hat{\EC'}(\F_{q^{3}}) = q^3+1 = \Phi_6(q)(q+1)$  and $\hat{\EC'}[p] \subseteq \hat{\EC'}(\F_{q^{6}})$.
    \end{enumerate} 
\end{remark}

\begin{corollary}\label{coll}
    Let $p=\Phi_6(x)$ and $q=\Phi_4(x)$ be primes with and suppose $\EC/\F_{p^u}$ and $\hat{\EC}/\F_{q^v}$ are a cycle of supersingular elliptic curves. Then there exists a supersingular cycle of isogenous curves $\EC'/\F_{p^4}$ and $\hat{\EC'}/\F_{q^6}$ such that the torsion points $\hat{\EC'}[p]\subseteq \hat{\EC'}(\F_{q^6})$ and $\EC'[q] \subseteq \EC'(\F_{p^4})$.
\end{corollary}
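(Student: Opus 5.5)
Given the supersingular cycle $\EC/\F_{p^u}$, $\hat{\EC}/\F_{q^v}$, I would produce $\EC'$ and $\hat{\EC'}$ in three steps: fix the isogeny classes over $\overline{\F}_p$ and $\overline{\F}_q$, choose a model over the claimed field using Waterhouse's theorem, and finally locate the full torsion.

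\textbf{Isogeny classes.} Over $\overline{\F}_p$ all supersingular curves are isogenous. Moreover, each has a model over $\F_{p^2}$ with Frobenius $\pm p$. So $\EC$ is $\overline{\F}_p$-isogenous to a curve defined over $\F_{p^2}$, and the same holds for $\hat{\EC}$ over $\F_{q^2}$. Isogenous curves have isomorphic groups $\EC[q]$ and $\hat{\EC}[p]$ over $\overline{\F}_p$ and $\overline{\F}_q$. The cycle condition ($q\mid\#\EC(\F_{p^u})$, $p\mid\#\hat{\EC}(\F_{q^v})$) only enters to confirm that $q\neq p$ and that the relevant torsion is nontrivial. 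Indeed $p\ne q$ since $q-p=x\neq 0$.

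\textbf{Choice of models.} On $\F_{p^4}$ I take $\EC'$ with trace $t=-2p^2$. This exists by \cite[Theorem 4.1(2)]{waterhouse} for every $p$, because $p^4$ is an even power. It gives $\#\EC'(\F_{p^4})=(p^2+1)^2=\Phi_4(p)^2$, which is the curve of case~\ref{case:p1} in the proof of Theorem~\ref{thm:main}. In fact it is just the base change to $\F_{p^4}$ of any $\F_{p^2}$-curve with Frobenius $\pi=\pm\sqrt{-1}\,p$. On $\F_{q^6}$ I take $\hat{\EC'}$ with trace $-2q^3$, which is Remark~\ref{rem:q}\ref{extra1}. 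It again exists by \cite[Theorem 4.1(2)]{waterhouse}, and $\#\hat{\EC'}(\F_{q^6})=(q^3+1)^2$. Since $q\mid\Phi_4(p)$ and $p\mid\Phi_6(q)\mid q^3+1$, the divisibility conditions~\ref{dec:pdiv} and~\ref{dec:qdiv} of Definition~\ref{def:cycle} hold.

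\textbf{Full torsion.} On $\EC'/\F_{p^4}$ the Frobenius $\pi_{p^4}$ equals the integer $-p^2$, because its trace is $-2p^2$ and its norm is $p^4$. Hence $\pi_{p^4}-1=-(p^2+1)$ as an endomorphism, and $\EC'(\F_{p^4})=\ker(\pi_{p^4}-1)=\EC'[p^2+1]$. Since $q\mid p^2+1$, this gives $\EC'[q]\subseteq\EC'(\F_{p^4})$. In the same way $\pi_{q^6}=-q^3$ on $\hat{\EC'}$, so $\hat{\EC'}(\F_{q^6})=\hat{\EC'}[q^3+1]\supseteq\hat{\EC'}[p]$. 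This is the content of \cite[Theorem IX.20]{GalbraithBSS}. Pairing-friendliness~\ref{def:pfq},~\ref{def:pfp} follows because $\ord_q(p^4)=1$ and $\ord_p(q^6)=1$; the Weil pairings then land in $\mu_q\subset\F_{p^4}^\times$ and $\mu_p\subset\F_{q^6}^\times$.

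\textbf{Main obstacle.} The hard part is making precise the sense in which $\EC'$ is ``the'' isogenous replacement, i.e.\ that no $\EC$ over any $\F_{p^u}$ can give a larger pairing target. The key fact here is that $q$-torsion of any supersingular curve over $\overline{\F}_p$ is defined over $\F_{p^{2m}}$ with $\ord_q(p^2)\mid m$. We have $\ord_q(p^2)=2$ since $\ord_q(p)=4$. So $\F_{p^4}$ is the minimal field containing $\mu_q$, and the embedding field of the order-$q$ Weil pairing always contains $\mu_q\subseteq\F_{p^4}$ regardless of $u$. Likewise $\ord_p(q)=6$ gives $\mu_p\subseteq\F_{q^6}$. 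To tie $\EC$ to $\EC'$, I would apply an $\overline{\F}_p$-isogeny of degree prime to $q$; it exists because the supersingular isogeny graph is connected by $\ell$-isogenies for $\ell\ne q$. Such an isogeny preserves the $q$-torsion structure, and this is what justifies the phrase ``isogenous curves'' in the statement.
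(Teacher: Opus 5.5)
Your argument is correct, but it takes a different route from the paper. The paper works from the MOV classification \cite[Table 1]{MOV}. A supersingular curve over $\F_{p^u}$ with $t^2=4p^u,\,0,\,p^u$ has its full $q$-torsion over $\F_{p^{uk}}$ with $k=1,2,3$ respectively. The paper's proof then just points to the minimal $(u,v)$ of Theorem~\ref{thm:main}, where one checks $uk=4$ (either $u=2$, $t=0$, or $u=4$, $t=-2p^2$) and $vk=6$ (either $v=2$, $t=q$, or $v=3$, $t=0$). So the paper's $\EC'$ and $\hat{\EC'}$ are implicitly the curves of Theorem~\ref{thm:main} viewed over $\F_{p^4}$ and $\F_{q^6}$, and ``isogenous'' rests on all supersingular curves being $\overline{\F}_p$-isogenous.

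You instead go directly to the target isogeny classes, traces $-2p^2$ and $-2q^3$, which Waterhouse supplies for every $p,q$ because both extension degrees are even. You get the torsion containment by noting that Frobenius is then the integer $-p^2$ (resp.\ $-q^3$), so the rational points are exactly $\EC'[p^2+1]$ (resp.\ $\hat{\EC'}[q^3+1]$). In effect you reprove the single row of the MOV table that is needed.

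What each route buys: yours is uniform in $x \bmod 12$, self-contained, and shows that the hypothesised cycle plays no role. The paper's ties $\EC'$ to the curves actually output by Br{\"o}ker's algorithm, and through the full MOV table it covers every supersingular trace. Your closing observation is the cleanest justification of the optimality claim, which the paper only illustrates case by case in Remark~\ref{rem:q}: the order-$q$ Weil pairing always lands in $\mu_q\subseteq\F_{p^4}^\times$ because $\ord_q(p)=4$, and likewise the order-$p$ pairing lands in $\mu_p\subseteq\F_{q^6}^\times$ because $\ord_p(q)=6$.

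Two side remarks should be tightened, though neither is load-bearing.
\begin{enumerate}
\item An $\F_{p^2}$-curve with Frobenius $\pm\sqrt{-1}\,p$ (trace $0$) exists only when $p\not\equiv 1 \pmod 4$. So for $x\equiv 0,4 \pmod{12}$ your $\EC'$ is not such a base change; this is exactly why Theorem~\ref{thm:main} needs $u=4$ there.
\item Your ``key fact'' holds only as a necessary condition: $\EC[q]\subseteq\EC(\F_{p^n})$ forces $4\mid n$ via the Weil pairing. It does not say that every model has its $q$-torsion over $\F_{p^4}$. For example, the $\F_{p^2}$-model with Frobenius $\pm p$ acquires its full $q$-torsion only over $\F_{p^8}$, as in item (IV) of Remark~\ref{rem:q}.
\end{enumerate}
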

\begin{proof}
    In general, since $\EC$ (resp. $\hat{\EC}$) is supersingular with $\#\EC(\F_{p^u})=p^u+1-t$ (resp. $\#\hat{\EC}(\F_{q^v})=q^v+1- t$ ) and $t^2\in\{0,p^u,4p^u\}$ (resp. $t^2\in\{0,q^v,4q^v\}$), we have $\EC[q] \subseteq \EC(\F_{p^{uk}})$ (resp. $\hat{\EC}[p] \subseteq \hat{\EC}(\F_{q^{vk}})$) with $k=1$ if $t^2=4p^u$ (resp. $t^2=4q^v$), $k=2$ if $t^2=0$, or $k=3$ if $t^2=p^u$ (resp. $t^2=q^v$)~\cite[Table 1]{MOV}. The above theorem lists the minimal possible values of $u$ and $v$ for each possible value of $x$ leading to a cycle. 
\end{proof}

\paragraph{Supersingular vs. MNT security.} There is no known difference in the security picture of our supersingular cycles and the underlying MNT cycles. Both cycles can use pairings to map ECDLP instances to the same order-$p$ subgroup of $\F_{q^6}^\times$ and/or the same order-$q$ subgroup of $\F_{p^4}^\times$, so the finite field DLP's are identical in both cases. Due to the large fields in the supersingular scenario, there may be a small concrete difference in solving the ECDLP's directly via Pollard's rho algorithm, but a real-world attack would never target the ECDLP's; the small embedding degrees always ensure that the finite field DLPs are the weak point of the cycle itself. The point of the remaining sections of this paper is to find another pairing-friendly curve, $E$, whose defining field and embedding degree (and thus DLP complexity) is the same as one of the curves in the cycle, but for which the complexity of solving the ECDLP in $E[r]$ is much closer to the complexity of solving the DLP in $\F_{p^4}^\times$. 

\paragraph{Br{\"o}ker's algorithm.} Over a field $\F_{p^u}$, the algorithm that constructs supersingular curves of a given trace (i.e. group order) is due to Br{\"o}ker~\cite{broker}. It starts by constructing a supersingular curve over the ground field, $\EC/\F_p$, and then outputs $\EC'/\F_{p^u}$ as an $\F_{p^u}$-twist of $\EC$. Since twists have the same $j$-invariant, it follows that the supersingular curves output by Br{\"o}ker's algorithm\footnote{In general, most supersingular curves have $j \not\in \F_{p}$, but they always come from the same isogeny class as a curve with $j \in \F_p$, which is what Br{\"o}ker's algorithm outputs.} always have $j(\EC') \in \F_p$, regardless of the field of definition of $\EC'$ -- see Table~\ref{tab:examples}. Br{\"o}ker's algorithm also constructs $\EC/\F_p$ by finding a root $j_0$ of the Hilbert class polynomial $H_\D(X) \in \F_p[X]$, but in the supersingular case the value of $\D$ that is used is the first prime $\D \equiv 3 \pmod{4}$ where $-\D$ is not a quadratic residue in $\F_p$. 

\begin{remark}[High-security, highly $2$-adic cycles]\label{rem:without}
Recall from Section~\ref{sec:intro} that a drawback of the MNT cycle is that it is infeasible to construct instances with large $2$-adicity, particularly at high-security levels~\cite{scalable,guillevic}. Using the supersingular construction allows us to simply search for values of $x$ with $2^\ell \mid x$ such that $p=x^2-x+1$ and $q=x^2+1$ are prime, noting that we immediately get a cycle where $2^\ell \mid p-1$ and $2^{2\ell} \mid q-1$. We ran a quick search for the largest $\ell$'s corresponding to 8 values of $x$ with $\lceil \log_2 x \rceil \in \{128,192,256,384,512,768,1024,2024\}$, and found the following values that give $p$ and $q$ primes: $x=2^{113}\cdot 32123$, $x=2^{176}\cdot 40335$, $x=2^{239}\cdot 108445$, $x=2^{370}\cdot 10431$, $x=2^{493}\cdot 354617$, $x=2^{748}\cdot 885549$, $x=2^{1006}\cdot 226419$, and $x=2^{2027}\cdot 1526763$. In light of Theorem~\ref{thm:main}, we note that searches restricting to $x \equiv 10 \bmod{12}$ would find optimal instances in the same time on average. 
\end{remark}

\section{Using supersingular cycles to construct pairing-friendly lollipops}\label{sec:constructing}

%%%%%%%%%%%%%%%%%

The main benefit of the construction in the previous section is that we no longer need to solve Pell equations to construct instances of cycles. In this section we show that reintroducing Pell equations allows us to find a pairing-friendly curve that is connected to our cycle, in order to produce the first construction of pairing-friendly lollipops.

We start by defining chains and cycles of pairing-friendly curves, pulling together definitions from El Housni and Guillevic~\cite[\S 2.3]{DBLP:conf/eurocrypt/HousniG22} and Chiesa, Chua and Weidner~\cite[Definition 7.1]{chiesa2019cycles}. 

\begin{definition}[Pairing-friendly chain]\label{def:chain}
An \emph{$m$-chain} of pairing-friendly elliptic curves is a list of distinct curves $E_1/\F_{p_1^{d_1}},\dots, E_m/\F_{p_m^{d_m}}$ with each $p_i$ a large prime, such that $p_i \mid \#E_{i+1}$ and $E_{i+1}$ is pairing-friendly with respect to $p_i$ for $i=1\dots m-1$. 
\end{definition}

\begin{definition}[Pairing-friendly cycle]\label{def:ncycle}
An \emph{$n$-cycle} of pairing-friendly elliptic curves is a list of distinct curves $E_1/\F_{q_1^{e_1}},\dots, E_n/\F_{q_n^{e_n}}$ with each $q_i$ a large prime, such that $q_i \mid \#E_{i+1}$ and $E_{i+1}$ is pairing-friendly with respect to $q_i$ for $i=1\dots n-1$, and such that $q_n \mid \#E_1$ and $E_1$ is pairing-friendly with respect to $q_n$. 
\end{definition}

We can now define lollipops by combining Definition~\ref{def:chain} and Definition~\ref{def:ncycle}. 

\begin{definition}[Pairing-friendly lollipop]\label{def:lollipop}
An \emph{(m,n)-lollipop} of pairing-friendly curves is an $m$-chain
\[
E_1 \rightarrow \dots \rightarrow E_{m} 
\]
of pairing-friendly curves, together with an $n$-cycle 
\begin{center}
\begin{tikzpicture}
            \node (A) at (0,1.5) {$\EC_{1}$};
            \node (B) at (0.75,0.75) {$\EC_{2}$};
            \node (C) at (0,0) {$\dots$};
	   \node (D) at (-0.75,0.75) {$\EC_{n}$};
            \draw[->, bend left=30] (A) to (B);
            \draw[->, bend left=30] (B) to (C);
	   \draw[->, bend left=30] (C) to (D);
	   \draw[->, bend left=30] (D) to (A);
\end{tikzpicture}
\end{center}
of pairing-friendly curves, such that $\{E_1,\dots E_m\} \cap \{\EC_{1},  \dots , \EC_{n} \}$ = $E_m$. 
\end{definition}

As stated in Section~\ref{sec:intro}, herein we will restrict to the case of $m=n=2$ and present $(2,2)$-lollipops.

Following Definition~\ref{def:lollipop}, and with the cycles we have defined in Section~\ref{sec:superMNT}, our task is to find another pairing-friendly curve defined over $\F_p$ with $p=x^2-x+1$ or $\F_q$ with $q=x^2+1$. The key to our construction is to observe that
\begin{align}\label{eq:N_q}
\Phi_4(p) = \underbrace{(x^2+1)}_{q} \cdot \underbrace{(x^2-2x+2)}_{N_q}
\end{align}
and
\begin{align}\label{eq:N_p}
 \Phi_6(q) = \underbrace{(x^2-x+1)}_{p} \cdot \underbrace{(x^2+x+1)}_{N_p}. 
\end{align}

Ideally, we want to find an ordinary curve $E/\F_p$ with $\#E(\F_p)=N_q$, so $E$ immediately has embedding degree 4 with respect to any large prime factor $r$ of $N_q$. Alternatively, we can also look for a curve $E/\F_q$ with $\#E(\F_q)=N_p$, so that $E$ has embedding degree 6 with respect to any large prime factor $r$ of $N_p$. As we discussed in Section~\ref{sec:prelims}, the feasibility of constructing these curves is related to the corresponding CM equations. We therefore examine the two cases separately. 

\paragraph{Case 1: $r$ divides $\#E(\F_p)=N_q$.} If $\#E(\F_p) = N_q$, then we must have that $t_E = x$, and hence the CM equation becomes 
\begin{align}\label{CMp}
DV^2 &= 4p-t^2, \nonumber\\
	&=4(x^2-x+1) - x^2, \nonumber\\
	&=3x^2-4x+4. 
\end{align}

\paragraph{Case 2: $r$ divides $\#E(\F_q)=N_p$.} If $\#E(\F_q) = N_p$, then we must have that $t_E = -x+1$, and hence the CM equation becomes 
\begin{align}\label{CMq}
DV^2 &= 4q-t^2, \nonumber\\
	&=4(x^2+1) - (-x+1)^2, \nonumber\\
	&=3x^2+2x+3. 
\end{align}

The next step is to make substitutions that transform these CM equations into generalised Pell equations. Taking $U=3x-2$ in~\eqref{CMp} or taking $U=3x+1$ in~\eqref{CMq}, we get the generalised Pell equation
\begin{align*}
U^2-3DV^2 &= -8, 
\end{align*}
which is the same as~\eqref{eq:Pell}. 

We first observe that any solution $U, V \in \Z$ to this generalised Pell equation must have $U \not\in 3\Z$. The substitutions $U=3x-2$ and $U=3x+1$ that gave rise to~\eqref{CMp} and~\eqref{CMq} reveal that the solutions we are interested in are those with $U \in 1 + 3\Z$. For any such solution, the integer $x=(U+2)/3$ will satisfy~\eqref{CMp} and the integer $x=(U-1)/3$ will satisfy~\eqref{CMq}. Given that we need $x$ to be even for $q=x^2+1$ to be prime, however, we see that only one of these two (consecutive) integers can be used as a candidate $x$-value for our lollipop construction. Nevertheless, it is convenient that we only need to write and launch one generalised Pell equation solver to search for solutions to both~\eqref{CMp} and~\eqref{CMq}. 

Before giving details of the full lollipop search we implemented, we first modify some results from~\cite{DBLP:conf/ants/KarabinaT08} in order to streamline the discriminants $D$ that we search over. 

\begin{lemma}\label{FqStickEp}
	Let $D$ be as in~\eqref{CMp} for $x\in 2\Z$. Then $3D\equiv 3,6,18,27 \pmod {48}$.
\end{lemma}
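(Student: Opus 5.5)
The plan is to reduce the statement to a congruence condition on the squarefree integer $D$ itself, and then read that condition off from the explicit form of the right-hand side of~\eqref{CMp} after substituting $x=2y$.

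\textbf{Step 1 (reformulating the target).} Since $3D \bmod 48$ is determined by $D \bmod 16$, the four allowed residues translate as follows:
\begin{itemize}
\item $3D\equiv 3,27 \pmod{48}$ is equivalent to $D\equiv 1,9 \pmod{16}$, i.e.\ to $D\equiv 1 \pmod 8$;
\item $3D\equiv 6,18 \pmod{48}$ is equivalent to $D\equiv 2,6 \pmod{16}$, i.e.\ to $D\equiv 2 \pmod 4$.
\end{itemize}
So it suffices to show that either $D\equiv 1 \pmod 8$ or $D\equiv 2\pmod 4$. No condition modulo $3$ is needed, which is consistent with the residues listed in the statement.

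\textbf{Step 2 (removing the square factor $4$).} Write $x=2y$. Then
\[
3x^2-4x+4 = 4(3y^2-2y+1).
\]
Put $m=3y^2-2y+1$, so that $DV^2=4m$ with $D$ squarefree. Hence $D$ is the squarefree part of $m$. I will split into cases according to the parity of $y$.

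\textbf{Case $y$ even.} Then $m$ is odd. Since $D$ is squarefree, the equation $DV^2=4m$ forces $V=2W$ with $W$ odd and $DW^2=m$. Because $W^2\equiv 1 \pmod 8$, this gives $D\equiv m \pmod 8$. Writing $y=2z$, we get
\[
m = 4z(3z-1)+1 .
\]
The product $z(3z-1)$ is always even, so $m\equiv 1\pmod 8$, and therefore $D\equiv 1 \pmod 8$.

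\textbf{Case $y$ odd.} Here $3y^2+1$ is even and $2y$ is even, so $m$ is even. Modulo $4$, using $y^2\equiv 1$, we get $m\equiv 3-2y+1 = 4-2y\equiv 2 \pmod 4$, since $2y\equiv 2\pmod 4$. Thus $m=2\cdot(\text{odd})$ and $v_2(4m)=3$. Since $D$ is squarefree, $v_2(D)\le 1$, and $v_2(D)+2v_2(V)=3$ forces $v_2(D)=1$. Hence $D\equiv 2\pmod 4$.

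\textbf{Main obstacle.} The argument involves no real difficulty. The only care needed is the $2$-adic bookkeeping between $D$, $V$ and $m$, and checking that squarefreeness of $D$ pins down $v_2(D)$ in each case.
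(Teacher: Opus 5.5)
Your overall plan (reduce to $D \bmod 16$, put $x=2y$, and do $2$-adic bookkeeping on $DV^2=4(3y^2-2y+1)$) is sound, and your case $y$ even is correct. The gap is in Step~1. The condition $D\equiv 2,6\pmod{16}$ is \emph{not} the same as $D\equiv 2\pmod 4$, because the latter also allows $D\equiv 10,14\pmod{16}$. These give $3D\equiv 30,42\pmod{48}$, which the lemma excludes. Your case $y$ odd only shows $v_2(D)=1$, so it leaves $30$ and $42$ open and does not prove the lemma. What is missing is the residue of the odd part of $D$ modulo $8$ in that case.

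The same bookkeeping, pushed one step further, supplies it. Write $y=2z+1$. Then $3y^2-2y+1=2(6z^2+4z+1)$, so $DV^2=8(6z^2+4z+1)$ with $6z^2+4z+1$ odd. Squarefreeness of $D$ forces $D=2D'$ and $V=2W$ with $D'$ and $W$ odd, hence $D'W^2=6z^2+4z+1$. Since $W^2\equiv 1\pmod 8$, you get $D'\equiv 6z^2+4z+1$, which is $\equiv 1$ or $3\pmod 8$ according as $z$ is even or odd. Thus $D\equiv 2$ or $6\pmod{16}$, i.e.\ $3D\equiv 6$ or $18\pmod{48}$, as required. With this repair your argument is a complete proof for every even $x$. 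By contrast, the paper's proof only tabulates $3\,\delta(f(2m))\bmod 48$ for $0\le m\le 47$. Since the squarefree part $\delta(f(x))$ is not a polynomial function of $x$, there is no a priori periodicity that would let such a finite table cover all $x$. Your $2$-adic argument is therefore what actually establishes the lemma in general.
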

\begin{proof}
	Let $f(x) = 3x^2 - 4x + 4$ and $\delta(n)$ be the square-free part of an integer $n$. Then computing $3\cdot \delta(f(2m)) \mod {48}$ for $m\in \{0,1,\ldots,47\}$ leads to $\{3,6,18,27\}$.  
\end{proof}

\begin{lemma}\label{FpStickEq}
	Let $D$ be as in~\eqref{CMq} for $x\in 2\Z$. Then $3D\equiv 9 \pmod {24}$. 
\end{lemma}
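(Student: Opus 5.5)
The plan is to mirror the proof of Lemma~\ref{FqStickEp}, but to do the computation by hand rather than by enumerating residues. Write $g(x)=3x^2+2x+3$ for the right-hand side of~\eqref{CMq}, so that $DV^2=g(x)$ with $D$ squarefree. First observe that $3D\equiv 9 \pmod{24}$ is equivalent to $D\equiv 3\pmod 8$: dividing the congruence $3D\equiv 9 \pmod{24}$ by $3$ gives exactly $D\equiv 3 \pmod 8$, and conversely. So it suffices to show $D\equiv 3 \pmod 8$.

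Substitute $x=2m$, which gives $g(2m)=12m^2+4m+3 = 4m(3m+1)+3$. The product $m(3m+1)$ is always even. If $m$ is even this is immediate; if $m$ is odd, then $3m+1$ is even. Hence $g(2m)\equiv 3 \pmod 8$, and in particular $g(2m)$ is odd.

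Since $DV^2=g(2m)$ is odd, $V$ must be odd, so $V^2\equiv 1\pmod 8$. Therefore $D\equiv DV^2 = g(2m)\equiv 3 \pmod 8$, and multiplying by $3$ gives $3D\equiv 9\pmod{24}$.

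There is no real obstacle in this argument. The only point needing care is the passage from $g(x)$ to its squarefree part $D$, and this is handled by the fact that every odd square is $1$ modulo $8$. Alternatively, one could proceed exactly as in Lemma~\ref{FqStickEp}: compute $3\cdot\delta(g(2m)) \bmod 24$ over a complete residue system of $m$. However, the argument above shows directly why the set of residues collapses to the single value $9$.
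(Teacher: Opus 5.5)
Your proof is correct, and it takes a different route from the paper.

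\textbf{The paper's route.} The paper tabulates $3\cdot\delta(g(2m)) \bmod 24$ for $m\in\{0,1,\ldots,23\}$, where $\delta$ is the squarefree part and $g$ is your $3x^2+2x+3$ (the paper calls it $f$). It then observes that only the residue $9$ occurs. Read literally, that finite check is not a proof, because $\delta(g(2m))$ is not a periodic function of $m$.

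\textbf{What your argument adds.} The check becomes a proof only once one knows that for odd $n$ we have $\delta(n)\equiv n \pmod 8$, since the square cofactor is odd and hence $\equiv 1 \pmod 8$. This makes $\delta(g(2m)) \bmod 8$ a function of $g(2m) \bmod 8$, which is periodic in $m$. That is exactly the step you make explicit. Combined with the identity $g(2m)=4m(3m+1)+3$ and the equivalence $3D\equiv 9 \pmod{24}\iff D\equiv 3\pmod 8$, it gives a complete, self-contained argument. It also explains why only a single residue can appear. A further small bonus is that you never use squarefreeness of $D$, only that $DV^2=g(2m)$ with $D,V\in\Z$.

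\textbf{What the paper's approach buys.} Its advantage is uniformity with Lemma~\ref{FqStickEp}. There $3x^2-4x+4$ at $x=2m$ equals $4(3m^2-2m+1)$, which is even. So the power of $2$ has to be stripped first, by splitting on the parity of $m$, before your odd-square observation applies. A brute-force table handles that case with no extra thought. Even so, the same idea yields a hand proof there as well.
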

\begin{proof}
	Let $f(x) = 3x^2 + 2x + 3$ and $\delta(n)$ be the square-free part of an integer $n$. Then computing $3\cdot\delta(f(2m)) \mod {24}$ for $m\in \{0,1,\ldots,23\}$ leads to $\{9\}$.  
\end{proof}

In what follows we set $D'=3D$. The above lemmas show we only need to search over 1/8 of the $D'$ values.

\paragraph{The high-level algorithm.} We summarise the above discussion by presenting the full lollipop algorithm as follows. On input of a small $D'$ (i.e. $D'<10^{17}$) as in Lemma~\ref{FqStickEp} or Lemma~\ref{FpStickEq}, do the following: 
\begin{enumerate}
\item \label{step:1} Solve the generalised Pell equation $U^2-D'V^2=-8$. 
\begin{enumerate}
\item If $D' \equiv 3,6,18,27 \pmod{48}$ and if $U \equiv 1 \pmod{3}$, then set $x = (U+2)/3$ and set $N=N_q=x^2-2x+2$ from~\eqref{eq:N_q}. Otherwise, pick a new $D'$ and start again. 
\item If $D' \equiv 9 \pmod{24}$ and if $U \equiv 1 \pmod{3}$, then set $x = (U-1)/3$ and set $N=N_p=x^2+x+1$ from~\eqref{eq:N_p}. Otherwise, pick a new $D'$ and start again.
\end{enumerate}
\item   \label{step:2} If $p=x^2-x+1$ and $q=x^2+1$ are both prime, proceed to Step~\ref{step:3}, otherwise pick a new $D'$ and start again. 
\item \label{step:3} Factor $N$, and let $r$ be a prime divisor of $N$ that is large enough to meet the requisite security level (e.g. $r \geq 2^{2\lambda}$ for $\lambda$-bit security). If no such $r$ exists, pick a new $D'$ and start again. 
\item \label{step:4} If $N=N_q$, then compute the Hilbert class polynomial $H_D(X) \in \F_p[X]$, otherwise if $N=N_p$ then compute the Hilbert class polynomial $H_D(X) \in \F_q[X]$. 
\item \label{step:5} Compute a root $j_0$ of $H_D(X)$ in $\F_p[X]$ (resp. $\F_q[X]$) and then construct the elliptic curve $E$ such that $j(E)=j_0$ (see Section~\ref{sec:prelims}).
\item \label{step:6} Output the lollipop as 
\[
E \rightarrow \EC \leftrightharpoons \hat{\EC}, 
\] 
where $\EC$ and $\hat{\EC}$ are the curves from Theorem~\ref{thm:main}.

\end{enumerate}

\paragraph{Practical adjustments.} The following are some practical adjustments that can be made in the first four steps of
our algorithm.
\begin{itemize}
	\item {\bf Step~\ref{step:1}}. Methods for solving the generalised Pell equation typically use variants of the continued fractions method, which allows us to bound the size of the solutions we want and abort if the solutions grow larger than this bound. We set a very large bound in an effort to try and find high-security instances, but any search for lollipops of a specific size would be accelerated significantly if the bound is lowered to reflect this. 
	\item {\bf Step~\ref{step:2}}. For the sake of efficiency, primality testing should be probabilistic within the full search algorithm and probable primes should be confirmed after the lollipops are output. 
	\item {\bf Step~\ref{step:3}}. For the sizes of lollipops we are searching for, this step is by far the most cumbersome. Here we are looking for one prime factor $r$ of a certain size, so full factorisations are not necessary, and in many cases it is wise to abort factorisations that do not terminate after a nominal amount of time (see below). 
	\item {\bf Step~\ref{step:4}}. For larger sizes of $D'$, there are other invariants analogous to the $j$-invariant for which the class polynomials are more efficient to compute and more compact to store, e.g. the Weber $f$-invariant. These alternative invariants also impose restrictions on $D'$, but are much preferred if they are compatible with those restrictions in Lemma~\ref{FqStickEp} and Lemma~\ref{FpStickEq}. More details can be found in Sutherland's {\sf classpoly} package\footnote{See~\url{https://math.mit.edu/~drew/classpoly.html}.}.
\end{itemize}

\section{Implementation and example lollipops}\label{sec:examples}

%%%%%%%%%%%%%%%%%%%%%%%%

In this section we give the implementation details of the algorithm in Section~\ref{sec:constructing} and present the 18 examples of pairing-friendly lollipops it found. 

\paragraph{Details of our search.} We now give the more fine-grained details of the search we ran, breaking down the discussion according to the six steps of the search algorithm. 

\begin{itemize}
\item {\bf Step~\ref{step:1}}. We used the \texttt{parallel GP} interface of \texttt{PARI/GP}~\cite{PARI2} running on AMD Ryzen Threadripper PRO 3995WX with 128 GB memory to solve the generalised Pell equation $U^2-D'V^2=-8$ using~\cite[Algorithm 1]{DBLP:conf/ants/KarabinaT08} and bounding $U$ at 2500 bits, for the following instances:

\begin{enumerate}[label=(\roman*)]
\item Equation~\eqref{CMp} with $D' \in \{48\ell+3, 48\ell+6, 48\ell+18, 48\ell+27\}$ as per Lemma~\ref{FqStickEp}. The equation was solved for $\ell=0$ to $\ell=1,295,124,911$, and solutions less than 2500 bits were obtained for $76,656,763$ of them. 
\item Equation~\eqref{CMq} with $D' \in 24\ell+9$ as per Lemma~\ref{FpStickEq}. The equation was solved for $\ell=0$ to $\ell=5,082,799,955$, and solutions less than 2500 bits were obtained for $63,146,643$ of them. 
\end{enumerate}
\item {\bf Step~\ref{step:2}}. We used the \texttt{parallel GP} interface of \texttt{PARI/GP} running on AMD Ryzen Threadripper PRO 3995WX with 128 GB memory to perform the following searches:
\begin{enumerate}[label=(\roman*)]
\item For the solutions of~\eqref{CMp} with $U \equiv 1\pmod{3}$, we set $x=(U+2)/3$ and tested the primality of $p=x^2-x+1$ and $q=x^2+1$, finding 4,281 pairs of primes $(p,q)$. 
\item For the solutions of~\eqref{CMq} with $U \equiv 1\pmod{3}$, we set $x=(U-1)/3$ and tested the primality of $p=x^2-x+1$ and $q=x^2+1$, finding 3,912 pairs of primes $(p,q)$.
\end{enumerate}
\item {\bf Step~\ref{step:3}}. We again used the \texttt{parallel GP} interface of \texttt{PARI/GP} running on AMD Ryzen Threadripper PRO 3995WX with 128 GB memory to perform the following factorisations (using \texttt{PARI/GP}'s internal factoring algorithms):
\begin{enumerate}[label=(\roman*)]
\item Of the 4,281 pairs of primes $(p,q)$ corresponding to~\eqref{CMp}, 350 candidates had both $p$ and $q$ at least 298 bits long.\footnote{We wanted a minimum of 80 bits of security, so chose the original {\sf MNT298} size from~\cite{scalable} as the lower bound on the sizes of $p$ and $q$.} Of these, we were able to successfully factor 186 of them. 
\item Of the 3,912 pairs of primes $(p,q)$ corresponding to~\eqref{CMq}, 337 candidates had both $p$ and $q$ at least 298 bits long. Of these, we were able to successfully factor 203 of them.
\end{enumerate}
\item {\bf Step~\ref{step:4}}. The following computations were run on an Intel Core i7-12800HX with 16 GB memory. Let $r$ be the biggest factor of $N \in \{N_p, N_q\}$ and recall that $N_p \approx N_q \approx p \approx q$. For all prime pairs where $(p,q)$ had $r > 2^{150}$ and where $N \gg r$ (we used a rough rule that ${\rm log}(N) \geq 2 {\rm log}(r)$, but made some exceptions), we computed the Hilbert class polynomial\footnote{This is computed using \texttt{classpoly d 0 q} where $d=-D$ if $-D\equiv 1\pmod 4$ or $d=-4D$ if $-D\equiv 2,3\pmod 4$.} in $\F_p[x]$ or $\F_q[x]$ using Sutherland's \texttt{classpoly} package~\cite{Sutherland11a,EngeSutherland}. 
\item {\bf Step~\ref{step:5}}. To find the roots of the polynomial $H_D(X)$ output from Step~\ref{step:4}, and then to construct the curve $E$ accordingly, we used \texttt{Magma}~\cite{Magma} running on Intel Core i7-12800HX with 16 GB memory. 
\item {\bf Step~\ref{step:6}}. We found a total of 18 lollipops $E \rightarrow \EC \leftrightharpoons \hat{\EC}$, 14 of which had the curve $E$ defined over $\F_p$ with embedding degree 4, and the other 4 of which had $E$ defined over $\F_q$ with embedding degree 6. These are all described in detail in the next section. 
\end{itemize}

%%%%%%%%%%%%%%%%%%%%%%%%

\paragraph{The examples.} We now present the 18 example of lollipops that were found in the search detailed above. We begin by recalling some notation from Section~\ref{sec:intro} and set some additional notation that is used in Figure~\ref{fig:types} and Table~\ref{tab:examples}. 
\iffullversion
The full list of detailed examples can be found in Appendix~\ref{sec:appendix}. Readers wishing to verify or experiment with any of the lollipops can refer to the \texttt{Magma} or \texttt{Pari/GP} code that is found at 
\begin{center}
\url{https://github.com/craigcostello/pairing-friendly-lollipops}.
\end{center}
\else
Readers wishing to verify or experiment with any of the lollipops can refer to the \texttt{Magma} or \texttt{Pari/GP} code that is found in the supplementary material accompanying this submission. 
\fi

\begin{itemize}
\item The examples are labelled as \textsf{lollipop-X-Y}, where ${\sf X}$ denotes the bitlength of $p$ and $q$, i.e. the characteristics of the fields of definition of the pairing-friendly curves in the lollipop, and ${\sf Y}$ denotes the bitlength of $r$, the characteristic of the non-pairing-friendly curve(s) attached to it. We also use ${\sf X}$ and ${\sf Y}$ as subscripts of curves to indicate the size of the underlying field characteristic. 
\item An arrow $A \rightarrow  B$ indicates that the curve $B$ is pairing-friendly with respect to the characteristic of the field of definition of the curve $A$.
\item $\EC_{\sf X}$ and $\hat{\EC}_{\sf X}$ are the supersingular (and thus pairing-friendly) curves in the cycle; $\EC_{\sf X}$ is defined over $\F_{p^2}$ or $\F_{p^4}$, while $\hat{\EC}_{\sf X}$ is defined\footnote{Recall from Section~\ref{sec:superMNT} (or observe in Table~\ref{tab:examples}) that these curves all have $j$-invariants in the ground field, but only find the $p$- or $q$-torsion required for the cycle over the extension field listed.} over $\F_{q^2}$ or $\F_{q^3}$.
\item The curve $E_{\sf X}$, which is such that $\#E_{\sf X}=h \cdot r$, with $r$ a ${\sf Y}$-bit prime. The curve $E_{\sf X}$ is pairing-friendly with respect to $r$; it has embedding degree 4 if it is defined over $\F_p$ and embedding degree 6 if it is defined over $\F_q$. 
\item A bold ${\bf E}$ denotes a non-pairing-friendly curve that is attached to the lollipop. Each example lollipop comes with three non-pairing-friendly incarnations: a cycle of two curves ${\bf E}_{\sf Y}/\F_r$ and $\hat{\bf E}_{\sf Y}/\F_{\hat{r}}$ with $r=\#\hat{\bf E}_{\sf Y}$ and $\hat{r}=\#{\bf E}_{\sf Y}$, a twist-secure short Weierstrass curve  ${\bf E}^{\rm W}_{\sf Y}/\F_r$, and a twist-secure (twisted) Edwards curve ${\bf E}^{\rm Ed}_{\sf Y}/\F_r$.
\end{itemize}

Figure~\ref{fig:types} depicts the five types of lollipops that were observed under our construction. These types depend on the field of definition of the curve $E_{\sf X}$ and the field of definition of the curve $\EC_{\sf X}$. \\

\begin{figure}[ht!]
	\centering
	\subfloat[]{%
		\parbox[c][3cm][b]{0.20\linewidth}{%
		\centering
		\begin{tikzpicture}
	            \node (A) at (0,3.2) {$\EC_{{\sf X}}/\F_{p^2}$};
	            \node (B) at (0,2) {$\hat{\EC}_{{\sf X}}/\F_{q^2}$};
	            \node (C) at (0,1) {${E}_{{\sf X}}/\F_p$};
	            \node (D) at (0,0) {$\mathbf{E}_{{\sf Y}}/\F_r$};
	            \draw[->, bend left=60] (A) to (B);
	            \draw[->, bend left=60] (B) to (A);
	            \draw[->] (C) to (B);
	            \draw[->] (D) to (C);
	        \end{tikzpicture}
	}}\hfil
	\subfloat[]{%
		\parbox[c][3cm][b]{0.20\linewidth}{%
		\centering
		\begin{tikzpicture}
	            \node (A) at (0,3.2) {$\EC_{{\sf X}}/\F_{p^4}$};
	            \node (B) at (0,2) {$\hat{\EC}_{{\sf X}}/\F_{q^2}$};
	            \node (C) at (0,1) {${E}_{{\sf X}}/\F_p$};
	            \node (D) at (0,0) {$\mathbf{E}_{{\sf Y}}/\F_r$};
	            \draw[->, bend left=60] (A) to (B);
	            \draw[->, bend left=60] (B) to (A);
	            \draw[->] (C) to (B);
	            \draw[->] (D) to (C);
	        \end{tikzpicture}
	}}\hfil
	\subfloat[]{%
		\parbox[c][3cm][b]{0.20\linewidth}{%
		\centering
		\begin{tikzpicture}
	            \node (A) at (0,3.2) {$\EC_{{\sf X}}/\F_{p^2}$};
	            \node (B) at (0,2) {$\hat{\EC}_{{\sf X}}/\F_{q^3}$};
	            \node (C) at (0,1) {${E}_{{\sf X}}/\F_p$};
	            \node (D) at (0,0) {$\mathbf{E}_{{\sf Y}}/\F_r$};
	            \draw[->, bend left=60] (A) to (B);
	            \draw[->, bend left=60] (B) to (A);
	            \draw[->] (C) to (B);
	            \draw[->] (D) to (C);
	        \end{tikzpicture}
	}}\hfil
	\subfloat[]{%
		\parbox[c][3cm][b]{0.20\linewidth}{%
		\centering
		\begin{tikzpicture}
	            \node (A) at (0,3.2) {$\hat{\EC}_{{\sf X}}/\F_{q^3}$};
	            \node (B) at (0,2) {$\EC_{{\sf X}}/\F_{p^2}$};
	            \node (C) at (0,1) {${E}_{{\sf X}}/\F_q$};
	            \node (D) at (0,0) {$\mathbf{E}_{{\sf Y}}/\F_r$};
	            \draw[->, bend left=60] (A) to (B);
	            \draw[->, bend left=60] (B) to (A);
	            \draw[->] (C) to (B);
	            \draw[->] (D) to (C);
	        \end{tikzpicture}
	}}\hfil
	\subfloat[]{%
		\parbox[c][3cm][b]{0.20\linewidth}{%
		\centering
		\begin{tikzpicture}
	            \node (A) at (0,3.2) {$\hat{\EC}_{{\sf X}}/\F_{q^3}$};
	            \node (B) at (0,2) {$\EC_{{\sf X}}/\F_{p^4}$};
	            \node (C) at (0,1) {${E}_{{\sf X}}/\F_q$};
	            \node (D) at (0,0) {$\mathbf{E}_{{\sf Y}}/\F_r$};
	            \draw[->, bend left=60] (A) to (B);
	            \draw[->, bend left=60] (B) to (A);
	            \draw[->] (C) to (B);
	            \draw[->] (D) to (C);
	        \end{tikzpicture}
	
	}}
	\caption{The five types of \textsf{lollipop-X-Y} instances, together with the non-pairing-friendly curve $\mathbf{E}_{{\sf Y}}/\F_r$ attached. Further explanation in text. 
}
	\label{fig:types}
\end{figure}

Table~\ref{tab:examples} summarises the 18 lollipops we found. We do not recommend that readers trudge through the details of all 18 of the example lollipops in Appendix~\ref{sec:appendix}, but instead suggest they pick one or two examples of interest after viewing Table~\ref{tab:examples}, or else read the subsection below on the differences between ${\sf MNT298}$ and $\textsf{lollipop-305-158}$. We wanted to include all of the lollipops for which there was an appreciable difference between ${\sf X}$ and ${\sf Y}$, and for which the ECDLP and DLP securities were in the same ballpark. The only exception here is \textsf{lollipop-956-451}, whose ECDLP security is far greater than its DLP security; we included this because it is the only example of practical interest that we found beyond the 128-bit security level.

\begin{table}[ht!] 
\centering
\small
\renewcommand{\tabcolsep}{0.1cm}
\renewcommand{\arraystretch}{1.2}
	\begin{tabular}{|c|c|c|c|c|c|c|c|c|c|c|}
\hline
		Example $\#$ 	&	Fig.			&	 	\multicolumn{4}{|c|}{the (ordinary) stick}				&	\multicolumn{5}{|c|}{the (supersingular) cycle}	\\
\cline{3-11}
(\textsf{lollipop-})			&	  \ref{fig:types}				& $\mathbf{E}_{\sf Y}$ cycle	&  \multicolumn{2}{|c|}{friendly $E_{\sf X}$}		& {\bf sec.}	& \multirow{ 2}{*}{$(u,v)$}	&	\multicolumn{1}{|c|}{${\EC}_{\sf X}/\F_{p^u}$}	&  $\hat{\EC}_{\sf X}/\F_{q^v}$	& \multicolumn{2}{|c|}{{\bf sec.} }	\\
\cline{3-5}\cline{8-11}
\textsf{X-Y}				&	type				& $\hat{D}$	 	& $D$	&    $k$ 	& $E_{\sf X}[r]$	& 	 &  $j(\EC_{\sf X})$ 	& $j(\hat{\EC}_{\sf X})$	&  $\F_{p^4}^\times$ & $\F_{q^6}^\times$ \\
\hline
%%%%%%%%%%%%%%%%%%%%%%%%%%%%%%%%%%%%%%%%%%%%%%%%%%%%%%%%%%%%%%%%%%%%%%%%
\ref{example:305-158}. \textsf{305-158}*	&	(a)	&	3		&54105234	&	4	&	{\bf 78}	&	(2,2)	& 1728	& 0		& {\bf 77}	& 	88	\\
%%%%%%%%%%%%%%%%%%%%%%%%%%%%%%%%%%%%%%%%%%%%%%%%%%%%%%%%%%%%%%%%%%%%%%%%
\ref{example:312-164}. \textsf{312-164} 	&	(b)	&192547		& 6110889	&	4	&	{\bf 81}	&	 (4,2)	& -884736	& 0		& {\bf 78}	&	89	\\
%%%%%%%%%%%%%%%%%%%%%%%%%%%%%%%%%%%%%%%%%%%%%%%%%%%%%%%%%%%%%%%%%%%%%%%%
\ref{example:314-154}. \textsf{314-154}	&	(d)	&	67		&906982043&	6	&	{\bf 76}	&	(2,3)	&	1728	& 8000	& {\bf 79}	&	89	\\
%%%%%%%%%%%%%%%%%%%%%%%%%%%%%%%%%%%%%%%%%%%%%%%%%%%%%%%%%%%%%%%%%%%%%%%%
\ref{example:347-192}. \textsf{347-192}	&	(b)	& 159307	&13173841 &	4	&	{\bf 95}		&	(4,2)	&	-3375	&0 			& {\bf 82}	&	95	\\
%%%%%%%%%%%%%%%%%%%%%%%%%%%%%%%%%%%%%%%%%%%%%%%%%%%%%%%%%%%%%%%%%%%%%%%%
\ref{example:348-168}. \textsf{348-168}	&	(b)	&	43		&8310359121 &	4	&	{\bf 83}	&	(4,2)	&	-3375	& 0 		& {\bf 83}	&	95	\\
%%%%%%%%%%%%%%%%%%%%%%%%%%%%%%%%%%%%%%%%%%%%%%%%%%%%%%%%%%%%%%%%%%%%%%%%
\ref{example:351-196}. \textsf{351-196}*	&	(a)	&	3		&180658	&	4	&	{\bf 97}	&	(2,2)	& 	1728	&  0		& {\bf 83}	&	95	\\
%%%%%%%%%%%%%%%%%%%%%%%%%%%%%%%%%%%%%%%%%%%%%%%%%%%%%%%%%%%%%%%%%%%%%%%%
\ref{example:354-182}. \textsf{354-182}	&	(b)	&	18403	&11984649  &	4	 &	{\bf 90}	&	(4,2)	&	21 \dots 33	&	0 & {\bf 83}	&	95	\\
%%%%%%%%%%%%%%%%%%%%%%%%%%%%%%%%%%%%%%%%%%%%%%%%%%%%%%%%%%%%%%%%%%%%%%%%
\ref{example:360-262}. \textsf{360-262}	&	(c)	&	101971	&6515276374  &	4	&	{\bf 130}	&	(2,3)	&	1728	& 8000						  &	{\bf 84} &	95	\\
%%%%%%%%%%%%%%%%%%%%%%%%%%%%%%%%%%%%%%%%%%%%%%%%%%%%%%%%%%%%%%%%%%%%%%%%
\ref{example:442-201}. \textsf{442-201}*	&	(a)	&	427		&1121454146 &	4	&	{\bf 100}	&	(2,2)	&	1728	&  0		&	{\bf 94} &	106	\\
%%%%%%%%%%%%%%%%%%%%%%%%%%%%%%%%%%%%%%%%%%%%%%%%%%%%%%%%%%%%%%%%%%%%%%%%
\ref{example:447-234}. \textsf{447-234}	&	(d)	&	6339	&	21781087203 &	6	&	{\bf 116}	&	(2,3)	&	1728	&	8000 &	{\bf 95} &	106	\\
%%%%%%%%%%%%%%%%%%%%%%%%%%%%%%%%%%%%%%%%%%%%%%%%%%%%%%%%%%%%%%%%%%%%%%%%
\ref{example:454-179}. \textsf{454-179}	&	(d)	&	355	&	7643719763 &	6	&	{\bf 89}	&	(2,3)	&	1728	&	8000 &	{\bf 96}&	106	\\
%%%%%%%%%%%%%%%%%%%%%%%%%%%%%%%%%%%%%%%%%%%%%%%%%%%%%%%%%%%%%%%%%%%%%%%%
\ref{example:470-217}. \textsf{470-217}	&	(b)	&	2003	&	6965939657 &	4	&	{\bf 108 }	&	(4,2)	&	-32768	& 0	&	{\bf 97} &	109	\\
%%%%%%%%%%%%%%%%%%%%%%%%%%%%%%%%%%%%%%%%%%%%%%%%%%%%%%%%%%%%%%%%%%%%%%%%
\ref{example:489-201}. \textsf{489-201}	&	(b)	&	547	&	372894729 &	4	&	{\bf 100}	&	(4,2)	&	-3375		&0 	& {\bf 98}	&	111	\\
%%%%%%%%%%%%%%%%%%%%%%%%%%%%%%%%%%%%%%%%%%%%%%%%%%%%%%%%%%%%%%%%%%%%%%%%
\ref{example:493-189}. \textsf{493-189}	&	(b)	&	57891&	9926408913&	4	&	{\bf 93}	&	(4,2)	&	-3375		&0 	& {\bf 99}	&	111	\\
%%%%%%%%%%%%%%%%%%%%%%%%%%%%%%%%%%%%%%%%%%%%%%%%%%%%%%%%%%%%%%%%%%%%%%%%
\ref{example:538-235}. \textsf{538-235}	&	(c)	&	22339&	137671666&	4	&	{\bf 117}	&	(2,3)	&	1728		& 8000	&	{\bf 104} &	115	\\
%%%%%%%%%%%%%%%%%%%%%%%%%%%%%%%%%%%%%%%%%%%%%%%%%%%%%%%%%%%%%%%%%%%%%%%%
\ref{example:574-261}. \textsf{574-261}*	&	(a)	&	3019	&	4381481154&	4	&	{\bf 129}	&	(2,2)	&	1728		& 0	&	{\bf 108} &	119	\\
%%%%%%%%%%%%%%%%%%%%%%%%%%%%%%%%%%%%%%%%%%%%%%%%%%%%%%%%%%%%%%%%%%%%%%%%
\ref{example:585-216}. \textsf{585-216}	&	(e)	&	3315	&	975588203 &	6	&	{\bf 107}	&	(4,3)	&	-3375	& $11\dots 85$	&	{\bf 109} &	121	\\
%%%%%%%%%%%%%%%%%%%%%%%%%%%%%%%%%%%%%%%%%%%%%%%%%%%%%%%%%%%%%%%%%%%%%%%%
\ref{example:956-451}. \textsf{956-451}*	&	(a)	&	56731&	40201986 	&	4	&	{\bf 225}	&	(2,2)	&	1728	&	0			&	{\bf 142} &	153	\\
\hline
	\end{tabular}\vspace{0.3cm}
\caption{A summary of the 18 lollipops with details in Appendix~\ref{sec:appendix}.}
	\label{tab:examples}
\end{table}

All of the lollipops have slightly different properties, the most practically relevant of which are given in Table~\ref{tab:examples}. The second column indicates the lollipop type, in reference to the five possibilities in Figure~\ref{fig:types}. The next column gives $\hat{D}$, the discriminant of the non-pairing-friendly cycle ${\bf E}_{\sf Y}/\F_r$ and $\hat{\bf E}_{\sf Y}/\F_{\hat{r}}$; these parameters were computed by tweaking the routine\footnote{See \url{https://github.com/asanso/Bandersnatch/blob/main/python-ref-impl/small-disc-curves.py}.} used to find Masson, Sanso and Zhang's Bandersnatch curve~\cite{bandersnatch}. Observe that Examples~\ref{example:305-158} and~\ref{example:351-196} have $\hat{D}=3$, which means that both ${\bf E}$ and $\hat{\bf E}$ can exploit efficient endomorphisms of the form $\phi \colon (x,y) \rightarrow (\xi x,y)$, similar to the secp/secq~\cite{secpsecq} and Pasta~\cite{pasta} cycles. The only other examples which may have an endomorphism that is efficient enough to use in practice are Example~\ref{example:314-154}, which has $\hat{D}=67$, and Example~\ref{example:348-168}, which has $\hat{D}=43$; these correspond to endomorphisms of degree 17 and 11, respectively.\footnote{If $\{1, \beta\}$ is an integral basis for the ring of integers in $\mathbb{Q}(\sqrt{\hat{D}})$, then the degree of the endomorphism $\phi$ is $N(\beta)-1$; for all of the $\hat{D}$ in Table~\ref{tab:examples}, we get ${\rm deg}(\phi)=(\hat{D}+1)/4$. Stark's algorithm~\cite{stark1973class} can be used to derive the explicit formulas to compute $\phi$.} The next three columns are associated with the pairing-friendly curve, $E_{\sf X}$, in the stick of the lollipop. The first gives the discriminant $D$ of the generalised Pell equation in~\eqref{eq:Pell} that was solved to find the lollipop, and the second gives the embedding degree, $k$, of $E_{\sf X}$ (with respect to $r$). If $k=4$, then $E_{\sf X}$ is defined over $\F_p$ and the CM equation is~\eqref{CMp}; if $k=6$, then $E_{\sf X}$ is defined over $\F_q$ and the CM equation is~\eqref{CMq}. The next column gives the bit-security of $E_{\sf X}[r]$ against Pollard's rho algorithm~\cite{pollard1978monte}, calculated as $\lfloor {\rm log}_2( \sqrt{\pi r/4})\rfloor$. The remaining columns summarise the curves in the supersingular cycle, starting with the fields of definition of $\EC_{\sf X}/\F_{p^u}$ and $\hat{\EC}_{\sf X}/\F_{q^v}$, where $u \in \{2,4\}$ and $v \in \{2,3\}$, depending on the value of $x \bmod{12}$ (see Theorem~\ref{thm:main}). We then give the $j$-invariants of $\EC_{\sf X} \in \F_p$ and $\hat{\EC}_{\sf X} \in \F_q$, which are computed during Br{\"o}ker's algorithm (see Section~\ref{sec:superMNT}). In the last two columns we estimate the DLP security of the lollipop against the special tower number field sieve (S-TNFS), which were obtained using the \texttt{SageMath}~\cite{sagemath} program for TNFS simulation by Guillevic and Singh~\cite{guillevic2021alpha}. All the simulations were run for $10^5$ samples with $\deg h = 2$ because the prime $p$ is given by a polynomial of degree 2 for our curves~\cite[Appendix B]{DBLP:conf/eurocrypt/HousniG22}. Finally, those examples marked with an $*$ are intended to highlight that they correspond to the optimal scenario where $u=v=2$. 

\paragraph{\textsf{MNT298} vs. \textsf{lollipop-305-158}.} Lollipops of pairing-friendly curves allow unbounded recursive pairing-based proof composition and simultaneously allow the initial SNARK circuit arithmetic to be performed over a field whose size is either optimal with respect to a given security level, or much closer to it. We illustrate the idea below by comparing the \textsf{MNT298} cycle from the original work proposing the use of pairing-friendly cycles~\cite{scalable} to Example~\ref{example:305-158} of our construction: \textsf{lollipop-305-158}. 

The \textsf{MNT298} instance targets the 80-bit security level by working with a cycle of ordinary MNT curves defined over the 298-bit primes $p$ and $q$. The corresponding DLP instances lie in $\F_{p^4}^\times$ and $\F_{q^6}^\times$, which lie in the fields of size $1192$ and $1788$ bits, respectively. The main reason this \textsf{MNT298} cycle is suboptimal at the 80-bit security level is that the primes $p$ and $q$ are almost twice as large as the sizes of primes that non-pairing-friendly curves could be defined over at this security level. 

As an alternative, consider~\textsf{lollipop-305-158}, summarised in Table~\ref{tab:examples}. It contains a $2$-cycle of supersingular curves, ${\EC}_{305}/\F_{p^2}$ and $\hat{\EC}_{305}/\F_{q^2}$, where $p$ and $q$ are two 305-bit primes. Just like the MNT construction above, pairings map ECDLP instances to instances of the DLP in $\F_{p^4}^\times$ and $\F_{q^6}^\times$, which are fields of size $1220$ and $1830$ bits, respectively. In this case, however, there is a third curve, $E_{305}/\F_p$, which is ordinary; it is pairing-friendly with respect to a 158-bit prime $r$, and the order-$r$ Weil pairing on $E_{305}$ maps into the same multiplicative group (i.e. $\F_{p^4}^\times$) as the order-$q$ Weil pairing on ${\EC}_{305}$. Moreover, the complexity of the best attack against the ECDLP in $E_{305}(\F_p)[r]$ closely matches the $80$ bits of security offered by the corresponding DLP's in $\F_{p^4}^\times$. On average, Pollard's rho~\cite{pollard1978monte} algorithm solves the ECDLP in $E_{305}(\F_p)[r]$ in $\sqrt{\frac{\pi \cdot r}{4}} \approx 2^{79}$ elliptic curve group operations. 

The crucial point is that $E_{305}/\F_p$ being pairing-friendly with respect to a 158-bit prime $r$ allows us to start the (first layer of the) recursive proof system over $\F_r$, rather than $\F_p$ or $\F_q$. Such a proof system could either work over the field $\F_r$ directly, or else could define non-pairing-friendly curves there, for which there are three scenarios of practical interest; these are depicted in Figure~\ref{fig:compare}. The first option (see Figure~\ref{pic:b}) is to define the short Weierstrass curve $\mathbf{E}_{158}^{\rm W}$ of the form $y^2=x^3-3x+b$, where $b=7032$ is minimal such that $\mathbf{E}_{158}^{\rm W}$ and its quadratic twist have prime order. The second option (see Figure~\ref{pic:c}) is to define the twisted Edwards curve $\mathbf{E}_{158}^{\rm Ed}$ of the form $-x^2+y^2=1+dx^2y^2$, where $d=7821$ is minimal such that the cofactors of the $\mathbf{E}_{158}^{\rm Ed}$ and its quadratic twist are optimally small, i.e. $8$ and $4$, respectively; this construction is analogous to Bowe and Hopwood's Jubjub curve\footnote{See~\url{https://github.com/zkcrypto/jubjub}} defined over the BLS-381 scalar field. The third option (see Figure~\ref{pic:d}) is to define a non-pairing-friendly cycle of prime order curves $\mathbf{E}_{158}$ and $\hat{\mathbf{E}}_{158}$. They both have complex multiplication (CM) discriminant $D=-3$, and can be written as 
$\mathbf{E}_{158}/\F_{r} \colon y^2=x^3+2$ and $\hat{\mathbf{E}}_{158}/\F_{\hat{r}} \colon y^2=x^3+2$, where $\hat{r}$ is also a 158-bit prime such that $\#\mathbf{E}_{158}(\F_r) = \hat{r}$ and $\#\hat{\mathbf{E}}_{158}(\F_{\hat{r}})=r$. Both of these curves come equipped with an efficient endomorphism that can accelerate (multi-)scalar multiplications, analogous to the secp/secq~\cite{secpsecq} and Pasta~\cite{pasta} cycles. Non-pairing-based SNARKs like Bulletproofs~\cite{bulletproofs} could be instantiated using the $\mathbf{E}_{158}$ and $\hat{\mathbf{E}}_{158}$ cycle, similar to their instantiations on the secp/secq and Pasta cycles, but the key difference is that the lollipop of pairing-friendly curves connected to $\mathbf{E}_{158}$ and $\hat{\mathbf{E}}_{158}$ allows the succinct {\sf Groth16}-style proofs to combine, compose and check the proofs on the non-pairing-friendly cycle. 

\begin{figure}[!ht]
	\centering
	\subfloat[]{%
		\parbox[c][3cm][b]{0.24\linewidth}{%
		\centering
		\begin{tikzpicture}
	            \node (A) at (0,3.2) {$E_{298}$};
	            \node (B) at (0,2.2) {$E_{298}$};
	            \node (C) at (0,1) {\enspace};
	            \node (D) at (0,0) {\enspace};
	            \draw[->, bend left=60] (A) to (B);
	            \draw[->, bend left=60] (B) to (A);
	        \end{tikzpicture}
	}}\hfil
	\subfloat[]{%
		\parbox[c][3cm][b]{0.24\linewidth}{%
		\centering
		\begin{tikzpicture}
	            \node (A) at (0,3) {$\hat{\EC}_{305}$};
	            \node (B) at (0,2) {$\EC_{305}$};
	            \node (C) at (0,1) {${E}_{305}$};
	            \node (D) at (0,0) {$\mathbf{E}_{158}^{\rm W}$};
	            \draw[->, bend left=60] (A) to (B);
	            \draw[->, bend left=60] (B) to (A);
	            \draw[->] (C) to (B);
	            \draw[->] (D) to (C);
	        \end{tikzpicture}
	\label{pic:b}
	}}\hfil
	\subfloat[]{%
		\parbox[c][3cm][b]{0.24\linewidth}{%
		\centering
		\begin{tikzpicture}
	            \node (A) at (0,3) {$\hat{\EC}_{305}$};
	            \node (B) at (0,2) {$\EC_{305}$};
	            \node (C) at (0,1) {${E}_{305}$};
	            \node (D) at (0,0) {$\mathbf{E}_{158}^{\rm Ed}$};
	            \draw[->, bend left=60] (A) to (B);
	            \draw[->, bend left=60] (B) to (A);
	            \draw[->] (C) to (B);
	            \draw[->] (D) to (C);
	        \end{tikzpicture}
\label{pic:c}
	}}\hfil
	\subfloat[]{%
		\parbox[c][3cm][b]{0.24\linewidth}{%
		\centering
		\begin{tikzpicture}
	            \node (A) at (0,3) {$\hat{\EC}_{305}$};
	            \node (B) at (0,2) {$\EC_{305}$};
	            \node (C) at (0,1) {${E}_{305}$};
	            \node (D) at (0,0) {$\mathbf{E}_{158}$};
		   \node (E) at (1.5,0) {$\hat{\mathbf{E}}_{158}$};
	            \draw[->, bend left=60] (A) to (B);
	            \draw[->, bend left=60] (B) to (A);
	            \draw[->] (C) to (B);
	            \draw[->] (D) to (C);
	            \draw[->, bend left=30] (D) to (E);
	            \draw[->, bend left=30] (E) to (D);
	        \end{tikzpicture}
\label{pic:d}
	}}
	\caption{\textsf{MNT298} vs. (the three incarnations of) \textsf{lollipop-305-158}. Further explanation in text.}
	\label{fig:compare}
\end{figure}

\section{Conclusion}\label{sec: conclusion}

We introduced a novel construction of cycles of pairing-friendly elliptic curves geared towards unbounded recursive pairing-based proof systems. The construction invokes the use of supersingular elliptic curves, rather than ordinary MNT curves. The main caveat of our construction is that the supersingular curves are defined over extension fields. The benefit is that constructing the curves themselves no longer requires the invocation of the CM method, so is not restricted to a finite number of low discriminant instances. We therefore realise an infinite family of pairing-friendly cycles that is more flexible than the MNT construction. 

This flexibility brings up multiple possibilities of interest in the context of proof systems. One such possibility is the construction of lollipops of pairing-friendly curves, which we explored in detail in Sections~\ref{sec:constructing} and~\ref{sec:examples}. 

Future work includes a high-performance library based on the optimal instantiation of Theorem~\ref{thm:main}, as well as addressing some of the lollipop-specific instances we discussed. This includes constructing lollipops that take the smallest field (i.e. $\F_r$) as input, so unbounded recursion can be layered on top of pre-existing elliptic curves or proof infrastructures, as well as more optimised searches that target high-security instances of lollipops.

\iffullversion
    \subsubsection{Acknowledgements.} Part of this work was done while Gaurish was an intern at Microsoft Research. Thanks to Patrick Longa, Andrew Sutherland, and Allan Steel for their technical assistance. Thanks to Michael Naehrig and Greg Zaverucha for several discussions during the preparation of this work. 
\fi

\bibliography{bib}
\appendix

\section{Details of the examples}\label{sec:appendix}

\begin{example}(\textsf{lollipop-305-158}*).\label{example:305-158} 
Solving~\eqref{CMp} with $D=54105234$ finds $x \equiv 10 \bmod{12}$ such that $p$ and $q$ are prime $305$-bit primes and gives a type-(a) lollipop in Fig.~\ref{fig:types}. The curves in the cycle are $\hat{\EC}_{305} \colon y^2=x^3+(\eta+2)$ where  $\F_{q^2}=\F_q(\eta)$ with $\eta^2+2=0$, and ${\EC}_{305} \colon y^2=x^3+(\mu+1)x$ where $\F_{p^2}=\F_p(\mu)$ with $\mu^2+1=0$. The curve in the stick is $E_{305}/\F_p$ has embedding degree 4 with respect to a 158-bit prime $r$. The non-pairing-friendly curves over $\F_r$ are $\mathbf{E}_{158}^{\rm W} \colon y^2=x^3-3x+b$ with $b=7032$ and $\mathbf{E}_{158}^{\rm Ed} \colon -x^2+y^2=1+dx^2y^2$ with $d=7821$. Both curves in the cycle $\mathbf{E}_{158}$ and $\hat{\mathbf{E}}_{158}$ have $\hat{D}=3$ and are equipped with endomorphisms of the form $\phi \colon (x,y) \mapsto (\xi x,y)$. 
\end{example}

\begin{example}(\textsf{lollipop-312-164}).\label{example:312-164} 
Solving~\eqref{CMp} with $D=6110889$ finds $x \equiv 4 \bmod{12}$ such that $p$ and $q$ are prime $312$-bit primes and gives a type-(b) lollipop in Fig.~\ref{fig:types}. Br{\"o}ker's algorithm terminates with $\D=19$ and $H_\D(X)= X + 884736$ outputs a curve ${\EC}_{312}/\F_{p^4}$ with $j({\EC}_{312})=-884736$, where $\F_{p^2}=\F_p(\mu)$ with $\mu^2+13=0$ and $\F_{p^4}=\F_{p^2}(\nu)$ with $\nu^2=\mu$. The other curve in the cycle is $\hat{\EC}_{312}/\F_q \colon y^2=x^3+(\eta+1)$ where  $\F_{q^2}=\F_q(\eta)$ with $\eta^2+3=0$. The curve in the stick is $E_{312}/\F_p$ has embedding degree 4 with respect to a 164-bit prime $r$. The non-pairing-friendly curves over $\F_r$ are $\mathbf{E}_{164}^{\rm W} \colon y^2=x^3-3x+b$ with $b=6457$ and $\mathbf{E}_{164}^{\rm Ed} \colon -x^2+y^2=1+dx^2y^2$ with $d=2709$. Both curves in the cycle $\mathbf{E}_{164}$ and $\hat{\mathbf{E}}_{164}$ have $\hat{D}=192547$. 
\end{example}

\begin{example}(\textsf{lollipop-314-154}).\label{example:314-154}  
Solving~\eqref{CMq} with $D=906982043$ finds $x \equiv 6 \bmod{12}$ such that $p$ and $q$ are prime $314$-bit primes and gives a type-(c) lollipop in Fig.~\ref{fig:types}. Over $\F_q$, Br{\"o}ker's algorithm terminates with $\hat{\D}=-8$ and $H_{\hat{\D}}(X)= X -8000$ and outputs $\hat{\EC}_{314}$ with $j(\hat{\EC}_{314}) =8000$. The other curve in the cycle is ${\EC}_{314} \colon y^2=x^3+(1+\mu)x$  where $\F_{p^2}=\F_p(\mu)$ with $\mu^2+1=0$.  The curve in the stick is $E_{314}/\F_q$ has embedding degree 6 with respect to a 154-bit prime $r$. The non-pairing-friendly curves over $\F_r$ are $\mathbf{E}_{154}^{\rm W} \colon y^2=x^3-3x+b$ with $b=17221$ and $\mathbf{E}_{154}^{\rm Ed} \colon -x^2+y^2=1+dx^2y^2$ with $d=4468$. Both curves in the cycle $\mathbf{E}_{154}$ and $\hat{\mathbf{E}}_{154}$ have $\hat{D}=67$. 
\end{example}

\begin{example}(\textsf{lollipop-347-192}).\label{example:347-192} 
Solving~\eqref{CMp} with $D=13173841$ finds $x \equiv 4 \bmod{12}$ such that $p$ and $q$ are prime $347$-bit primes and gives a type-(b) lollipop in Fig.~\ref{fig:types}. Br{\"o}ker's algorithm terminates with $\D=7$ and $H_\D(X)= X + 3375$ outputs a curve ${\EC}_{347}/\F_{p^4}$ with $j({\EC}_{347})=-3375$, where $\F_{p^2}=\F_p(\mu)$ with $\mu^2+2=0$ and $\F_{p^4}=\F_{p^2}(\nu)$ with $\nu^2=\mu$. The other curve in the cycle is $\hat{\EC}_{347}/\F_q \colon y^2=x^3+(\eta+4)$ where  $\F_{q^2}=\F_q(\eta)$ with $\eta^2+3=0$. The curve in the stick is $E_{347}/\F_p$ has embedding degree 4 with respect to a 192-bit prime $r$. The non-pairing-friendly curves over $\F_r$ are $\mathbf{E}_{192}^{\rm W} \colon y^2=x^3-3x+b$ with $b=11566$ and $\mathbf{E}_{192}^{\rm Ed} \colon -x^2+y^2=1+dx^2y^2$ with $d=69127$. Both curves in the cycle $\mathbf{E}_{192}$ and $\hat{\mathbf{E}}_{192}$ have $\hat{D}=159307$. 
\end{example}

\begin{example}(\textsf{lollipop-348-168}).\label{example:348-168} 
Solving~\eqref{CMp} with $D=8310359121$ finds $x \equiv 4 \bmod{12}$ such that $p$ and $q$ are prime $347$-bit primes and gives a type-(b) lollipop in Fig.~\ref{fig:types}. Br{\"o}ker's algorithm terminates with $\D=7$ and $H_\D(X)= X + 3375$ outputs a curve ${\EC}_{348}/\F_{p^4}$ with $j({\EC}_{348})=-3375$, where $\F_{p^2}=\F_p(\mu)$ with $\mu^2+7=0$ and $\F_{p^4}=\F_{p^2}(\nu)$ with $\nu^2=\mu$. The other curve in the cycle is $\hat{\EC}_{347} \colon y^2=x^3+(\eta+6)$ where  $\F_{q^2}=\F_q(\eta)$ with $\eta^2+3=0$. The curve in the stick is $E_{348}/\F_p$ has embedding degree 4 with respect to a 168-bit prime $r$. The non-pairing-friendly curves over $\F_r$ are $\mathbf{E}_{168}^{\rm W} \colon y^2=x^3-3x+b$ with $b=26688$ and $\mathbf{E}_{168}^{\rm Ed} \colon -x^2+y^2=1+dx^2y^2$ with $d=78971$. Both curves in the cycle $\mathbf{E}_{168}$ and $\hat{\mathbf{E}}_{168}$ have $\hat{D}=43$. 
\end{example}

\begin{example}(\textsf{lollipop-351-196}*).\label{example:351-196} 
Solving~\eqref{CMp} with $D=180658$ finds $x \equiv 10 \bmod{12}$ such that $p$ and $q$ are prime $351$-bit primes and gives a type-(a) lollipop in Fig.~\ref{fig:types}. The curves in the cycle are $\hat{\EC}_{351}/\F_q \colon y^2=x^3+(\eta+4)$ where  $\F_{q^2}=\F_q(\eta)$ with $\eta^2+3=0$, and ${\EC}_{351} \colon y^2=x^3+(\mu+1)x$ where $\F_{p^2}=\F_p(\mu)$ with $\mu^2+1=0$. The curve in the stick is $E_{351}/\F_p$ has embedding degree 4 with respect to a 196-bit prime $r$. The non-pairing-friendly curves over $\F_r$ are $\mathbf{E}_{196}^{\rm W} \colon y^2=x^3-3x+b$ with $b=7193$ and $\mathbf{E}_{196}^{\rm Ed} \colon -x^2+y^2=1+dx^2y^2$ with $d=128421$. 

An unlikely coincidence arose in this example. There were actually two prime order curves over $\F_r$ with CM discriminant $-3$. The curve $\mathbf{E}_{196}/\F_r \colon y^2=x^3-5$ has prime group order $\hat{r}$, forming a cycle with the curve $\hat{\mathbf{E}}_{196}/\F_{\hat{r}} \colon y^2=x^3-5$, while the curve $\mathbf{E}_{{196}'}/\F_r \colon y^2=x^3+28$ has prime group order $\hat{r}'$, forming a cycle with the curve $\hat{\mathbf{E}}_{196}'/\F_{\hat{r}'} \colon y^2=x^3+28$. We depict this in Figure~\ref{fig:double-cycle}, where we dropped the subscripts. All of these curves come equipped with endomorphisms of the form $\phi \colon (x,y) \mapsto (\xi x,y)$. 

\begin{figure}[ht!]
	\centering
	\begin{tikzpicture}
	            \node (C) at (0,1) {${E}/\F_p$};
	%%%
	            \node (D) at (-0.5,0) {$\mathbf{E}/\F_r$};
		   \node (E) at (-2,0) {$\hat{\mathbf{E}}/\F_{\hat{r}}$};
	            \draw[->] (D) to (C);
	   	\draw[->, bend left=30] (D) to (E);
	     	\draw[->, bend left=30] (E) to (D);
	%%%
	            \node (F) at (0.5,0) {$\mathbf{E}'/\F_{r}$};
		   \node (G) at (2,0) {$\hat{\mathbf{E}'}/\F_{\hat{r}'}$};
	            \draw[->] (F) to (C);
	   	\draw[->, bend left=30] (F) to (G);
	     	\draw[->, bend left=30] (G) to (F);
	\end{tikzpicture}
	\caption{A rare coincidence of two $D=-3$ non-pairing-friendly cycles possible over $\F_r$. }
	\label{fig:double-cycle}
\end{figure}

\end{example}

\begin{example}(\textsf{lollipop-354-182}). \label{example:354-182} 
Solving~\eqref{CMp} with $D=11984649$ finds $x \equiv 4 \bmod{12}$ such that $p$ and $q$ are prime $354$-bit primes and gives a type-(b) lollipop in Fig.~\ref{fig:types}. Br{\"o}ker's algorithm terminates with $\D=31$ and $H_\D(X)$ of degree 3, and outputs a curve ${\EC}_{354}/\F_{p^4}$ with $j({\EC}_{354})=21 \dots 33$, where $\F_{p^2}=\F_p(\mu)$ with $\mu^2+2=0$ and $\F_{p^4}=\F_{p^2}(\nu)$ with $\nu^2=\mu$. The other curve in the cycle is $\hat{\EC}_{354}/\F_q \colon y^2=x^3+(\eta+1)$ where  $\F_{q^2}=\F_q(\eta)$ with $\eta^2+3=0$. The curve in the stick is $E_{354}/\F_p$ has embedding degree 4 with respect to a 182-bit prime $r$. The non-pairing-friendly curves over $\F_r$ are $\mathbf{E}_{182}^{\rm W} \colon y^2=x^3-3x+b$ with $b=7906$ and $\mathbf{E}_{182}^{\rm Ed} \colon -x^2+y^2=1+dx^2y^2$ with $d=2758$. Both curves in the cycle $\mathbf{E}_{182}$ and $\hat{\mathbf{E}}_{182}$ have $\hat{D}=18403$. 
\end{example}

\begin{example}(\textsf{lollipop-360-262}). \label{example:360-262} Solving~\eqref{CMp} with $D=6515276374$ finds $x \equiv 6 \bmod{12}$ such that $p$ and $q$ are prime $360$-bit primes and gives a type-(a) lollipop in Fig.~\ref{fig:types}. Over $\F_q$, Br{\"o}ker's algorithm terminates with $\hat{\D}=-8$ and $H_{\hat{\D}}(X)= X -8000$ and outputs $\hat{\EC}_{360}$ with $j(\hat{\EC}_{360}) =8000$. The other curve in the cycle is ${\EC}_{360} \colon y^2=x^3+(2+\mu)x$  where $\F_{p^2}=\F_p(\mu)$ with $\mu^2+1=0$. The curve in the stick is $E_{360}/\F_p$ has embedding degree 4 with respect to a 262-bit prime $r$. The non-pairing-friendly curves over $\F_r$ are $\mathbf{E}_{262}^{\rm W} \colon y^2=x^3-3x+b$ with $b=43954$ and $\mathbf{E}_{262}^{\rm Ed} \colon -x^2+y^2=1+dx^2y^2$ with $d=119965$. The curves in the cycle $\mathbf{E}_{262}$ and $\hat{\mathbf{E}}_{262}$ have $\hat{D}=101971$. 
\end{example}

\begin{example}(\textsf{lollipop-442-201}*). \label{example:442-201} Solving~\eqref{CMp} with $D=1121454146$ finds $x \equiv 10 \bmod{12}$ such that $p$ and $q$ are prime $442$-bit primes and gives a type-(a) lollipop in Fig.~\ref{fig:types}. The curves in the cycle are $\hat{\EC}_{442}/\F_q \colon y^2=x^3+(\eta+2)$ where  $\F_{q^2}=\F_q(\eta)$ with $\eta^2+2=0$, and ${\EC}_{442} \colon y^2=x^3+(\mu+1)x$ where $\F_{p^2}=\F_p(\mu)$ with $\mu^2+1=0$. The curve in the stick is $E_{442}/\F_p$ has embedding degree 4 with respect to a 201-bit prime $r$. The non-pairing-friendly curves over $\F_r$ are $\mathbf{E}_{201}^{\rm W} \colon y^2=x^3-3x+b$ with $b=1858$ and $\mathbf{E}_{201}^{\rm Ed} \colon -x^2+y^2=1+dx^2y^2$ with $d=80379$. Both curves in the cycle $\mathbf{E}_{201}$ and $\hat{\mathbf{E}}_{201}$ have $\hat{D}=18403$. 
\end{example}

\begin{example}(\textsf{lollipop-447-234}). \label{example:447-234} Solving~\eqref{CMq} with $D=8162838387$ finds $x \equiv 6 \bmod{12}$ such that $p$ and $q$ are prime $447$-bit primes and gives a type-(c) lollipop in Fig.~\ref{fig:types}. Over $\F_q$, Br{\"o}ker's algorithm terminates with $\hat{\D}=-8$ and $H_{\hat{\D}}(X)= X -8000$ and outputs $\hat{\EC}_{447}$ with $j(\hat{\EC}_{447}) =8000$. The other curve in the cycle is ${\EC}_{447} \colon y^2=x^3+(5+\mu)x$  where $\F_{p^2}=\F_p(\mu)$ with $\mu^2+1=0$.  The curve in the stick is $E_{447}/\F_q$ has embedding degree 6 with respect to a 234-bit prime $r$. The non-pairing-friendly curves over $\F_r$ are $\mathbf{E}_{234}^{\rm W} \colon y^2=x^3-3x+b$ with $b=10885$ and $\mathbf{E}_{234}^{\rm Ed} \colon x^2+y^2=1+dx^2y^2$ with $d=-54523$. Both curves in the cycle $\mathbf{E}_{234}$ and $\hat{\mathbf{E}}_{234}$ have $\hat{D}=6339$. 
\end{example}

\begin{example}(\textsf{lollipop-454-179}). \label{example:454-179} Solving~\eqref{CMq} with $D=7643719763$ finds $x \equiv 6 \bmod{12}$ such that $p$ and $q$ are prime $454$-bit primes and gives a type-(c) lollipop in Fig.~\ref{fig:types}. Over $\F_q$, Br{\"o}ker's algorithm terminates with $\hat{\D}=-8$ and $H_{\hat{\D}}(X)= X-8000$ and outputs $\hat{\EC}_{454}$ with $j(\hat{\EC}_{454}) =8000$. The other curve in the cycle is ${\EC}_{454} \colon y^2=x^3+(5+\mu)x$  where $\F_{p^2}=\F_p(\mu)$ with $\mu^2+1=0$.  The curve in the stick is $E_{454}/\F_q$ has embedding degree 6 with respect to a 179-bit prime $r$. The non-pairing-friendly curves over $\F_r$ are $\mathbf{E}_{179}^{\rm W} \colon y^2=x^3-3x+b$ with $b=10827$ and $\mathbf{E}_{179}^{\rm Ed} \colon x^2+y^2=1+dx^2y^2$ with $d=-68661$. Both curves in the cycle $\mathbf{E}_{179}$ and $\hat{\mathbf{E}}_{179}$ have $\hat{D}=355$. 
\end{example}

\begin{example}(\textsf{lollipop-470-217}). \label{example:470-217} Solving~\eqref{CMp} with $D=6965939657$ finds $x \equiv 4 \bmod{12}$ such that $p$ and $q$ are prime $470$-bit primes and gives a type-(b) lollipop in Fig.~\ref{fig:types}. Br{\"o}ker's algorithm terminates with $\D=11$ and $H_\D(X)= X + 32768$ and outputs a curve ${\EC}_{470}/\F_{p^4}$ with $j({\EC}_{470})=-32768$, where $\F_{p^2}=\F_p(\mu)$ with $\mu^2+11=0$ and $\F_{p^4}=\F_{p^2}(\nu)$ with $\nu^2=\mu$. The other curve in the cycle is $\hat{\EC}_{470}/\F_q \colon y^2=x^3+(\eta+2)$ where  $\F_{q^2}=\F_q(\eta)$ with $\eta^2+3=0$. The curve in the stick is $E_{470}/\F_p$ has embedding degree 4 with respect to a 217-bit prime $r$. The non-pairing-friendly curves over $\F_r$ are $\mathbf{E}_{217}^{\rm W} \colon y^2=x^3-3x+b$ with $b=72802$ and $\mathbf{E}_{217}^{\rm Ed} \colon -x^2+y^2=1+dx^2y^2$ with $d=70192$. Both curves in the cycle $\mathbf{E}_{217}$ and $\hat{\mathbf{E}}_{217}$ have $\hat{D}=2003$.
\end{example}

\begin{example}(\textsf{lollipop-489-201}). \label{example:489-201} Solving~\eqref{CMp} with $D=372894729$ finds $x \equiv 4 \bmod{12}$ such that $p$ and $q$ are prime $489$-bit primes and gives a type-(b) lollipop in Fig.~\ref{fig:types}. Br{\"o}ker's algorithm terminates with $\D=7$ and $H_\D(X)=  X + 3375$ and outputs a curve ${\EC}_{489}/\F_{p^4}$ with $j({\EC}_{489})=-3375$, where $\F_{p^2}=\F_p(\mu)$ with $\mu^2+2=0$ and $\F_{p^4}=\F_{p^2}(\nu)$ with $\nu^2=\mu$. The other curve in the cycle is $\hat{\EC}_{489}/\F_q \colon y^2=x^3+(\eta+1)$ where  $\F_{q^2}=\F_q(\eta)$ with $\eta^2+3=0$. The curve in the stick is $E_{489}/\F_p$ has embedding degree 4 with respect to a 201-bit prime $r$. The non-pairing-friendly curves over $\F_r$ are $\mathbf{E}_{201}^{\rm W} \colon y^2=x^3-3x+b$ with $b=4438$ and $\mathbf{E}_{201}^{\rm Ed} \colon -x^2+y^2=1+dx^2y^2$ with $d=96027$. Both curves in the cycle $\mathbf{E}_{201}$ and $\hat{\mathbf{E}}_{201}$ have $\hat{D}=547$. 
\end{example}

\begin{example}(\textsf{lollipop-493-189}). \label{example:493-189} Solving~\eqref{CMp} with $D=9926408913$ finds $x \equiv 4 \bmod{12}$ such that $p$ and $q$ are prime $493$-bit primes and gives a type-(b) lollipop in Fig.~\ref{fig:types}. Br{\"o}ker's algorithm terminates with $\D=7$ and $H_\D(X)=  X + 3375$ and outputs a curve ${\EC}_{493}/\F_{p^4}$ with $j({\EC}_{493})=-3375$, where $\F_{p^2}=\F_p(\mu)$ with $\mu^2+2=0$ and $\F_{p^4}=\F_{p^2}(\nu)$ with $\nu^2=\mu$. The other curve in the cycle is $\hat{\EC}_{493}/\F_q \colon y^2=x^3+(\eta+1)$ where  $\F_{q^2}=\F_q(\eta)$ with $\eta^2+3=0$. The curve in the stick is $E_{493}/\F_p$ has embedding degree 4 with respect to a 189-bit prime $r$. The non-pairing-friendly curves over $\F_r$ are $\mathbf{E}_{189}^{\rm W} \colon y^2=x^3-3x+b$ with $b=40288$ and $\mathbf{E}_{189}^{\rm Ed} \colon -x^2+y^2=1+dx^2y^2$ with $d=54091$. Both curves in the cycle $\mathbf{E}_{189}$ and $\hat{\mathbf{E}}_{189}$ have $\hat{D}=57891$. 
\end{example}

\begin{example}(\textsf{lollipop-538-235}). \label{example:538-235} Solving~\eqref{CMp} with $D=137671666$ finds $x \equiv 6 \bmod{12}$ such that $p$ and $q$ are prime $538$-bit primes and gives a type-(a) lollipop in Fig.~\ref{fig:types}. Over $\F_q$, Br{\"o}ker's algorithm terminates with $\hat{\D}=-8$ and $H_{\hat{\D}}(X)= X -8000$ and outputs $\hat{\EC}_{538}$ with $j(\hat{\EC}_{538}) =8000$. The other curve in the cycle is ${\EC}_{538} \colon y^2=x^3+(1+\mu)x$  where $\F_{p^2}=\F_p(\mu)$ with $\mu^2+1=0$. The curve in the stick is $E_{538}/\F_p$ has embedding degree 4 with respect to a 235-bit prime $r$. The non-pairing-friendly curves over $\F_r$ are $\mathbf{E}_{235}^{\rm W} \colon y^2=x^3-3x+b$ with $b=11095$ and $\mathbf{E}_{235}^{\rm Ed} \colon x^2+y^2=1+dx^2y^2$ with $d=101828$. The curves in the cycle $\mathbf{E}_{235}$ and $\hat{\mathbf{E}}_{235}$ have $\hat{D}=22339$. 
\end{example}

\begin{example}(\textsf{lollipop-574-261}*). \label{example:574-261} Solving~\eqref{CMp} with $D=4381481154$ finds $x \equiv 10 \bmod{12}$ such that $p$ and $q$ are prime $574$-bit primes and gives a type-(a) lollipop in Fig.~\ref{fig:types}. The curves in the cycle are $\hat{\EC}_{574}/\F_q \colon y^2=x^3+(\eta+2)$ where  $\F_{q^2}=\F_q(\eta)$ with $\eta^2+2=0$, and ${\EC}_{574} \colon y^2=x^3+(\mu+1)x$ where $\F_{p^2}=\F_p(\mu)$ with $\mu^2+1=0$. The curve in the stick is $E_{574}/\F_p$ has embedding degree 4 with respect to a 261-bit prime $r$. The non-pairing-friendly curves over $\F_r$ are $\mathbf{E}_{261}^{\rm W} \colon y^2=x^3-3x+b$ with $b=7182$ and $\mathbf{E}_{261}^{\rm Ed} \colon -x^2+y^2=1+dx^2y^2$ with $d=75745$. Both curves in the cycle $\mathbf{E}_{261}$ and $\hat{\mathbf{E}}_{261}$ have $\hat{D}=3019$. 
\end{example}

\begin{example}(\textsf{lollipop-585-216}). \label{example:585-216} Solving~\eqref{CMq} with $D=975588203$ finds $x \equiv 0 \bmod{12}$ such that $p$ and $q$ are prime $585$-bit primes and gives a type-(d) lollipop in Fig.~\ref{fig:types}. Br{\"o}ker's algorithm terminates with $\D=7$ and $H_\D(X)=  X+3375$ and outputs a curve ${\EC}_{585}/\F_{p^4}$ with $j({\EC}_{585})=-3375$, where $\F_{p^2}=\F_p(\mu)$ with $\mu^2+2=0$ and $\F_{p^4}=\F_{p^2}(\nu)$ with $\nu^2=\mu$.  Over $\F_q$, Br{\"o}ker's algorithm terminates with $\hat{\D}=47$ and $H_{\hat{\D}}(X)$ of degree 5, and outputs $\hat{\EC}_{585}$ with $j(\hat{\EC}_{585})=11 \dots 85$. The curve in the stick is $E_{585}/\F_q$ has embedding degree 6 with respect to a 216-bit prime $r$. The non-pairing-friendly curves over $\F_r$ are $\mathbf{E}_{216}^{\rm W} \colon y^2=x^3-3x+b$ with $b=8146$ and $\mathbf{E}_{216}^{\rm Ed} \colon x^2+y^2=1+dx^2y^2$ with $d=-36607$. Both curves in the cycle $\mathbf{E}_{216}$ and $\hat{\mathbf{E}}_{216}$ have $\hat{D}=3315$. 
\end{example}

\begin{example}(\textsf{lollipop-956-451}*). \label{example:956-451} Solving~\eqref{CMp} with $D=40201986$ finds $x \equiv 10 \bmod{12}$ such that $p$ and $q$ are prime $956$-bit primes and gives a type-(a) lollipop in Fig.~\ref{fig:types}. The curves in the cycle are $\hat{\EC}_{956}/\F_q \colon y^2=x^3+(\eta+2)$ where  $\F_{q^2}=\F_q(\eta)$ with $\eta^2+2=0$, and ${\EC}_{956} \colon y^2=x^3+(\mu+1)x$ where $\F_{p^2}=\F_p(\mu)$ with $\mu^2+1=0$. The curve in the stick is $E_{956}/\F_p$ has embedding degree 4 with respect to a 451-bit prime $r$. The non-pairing-friendly curves over $\F_r$ are $\mathbf{E}_{451}^{\rm W} \colon y^2=x^3-3x+b$ with $b=146441$ and $\mathbf{E}_{451}^{\rm Ed} \colon -x^2+y^2=1+dx^2y^2$ with $d=104359$. Both curves in the cycle $\mathbf{E}_{451}$ and $\hat{\mathbf{E}}_{451}$ have $\hat{D}=56731$. Given the discrepancy in the DLP and ECDLP security for this lollipop (see Table~\ref{tab:examples}), it is possible that a composite order curve ${\bf E}/\F_r$ with a prime order subgroup of closer to 256-bits is preferable. In this case it should be possible to find such a curve that has a discriminant low enough to exploit endomorphisms, analogous to the Bandersnatch curve~\cite{bandersnatch} (but with a larger cofactor). 
\end{example}

\end{document}